\newtheorem{theorem}{Theorem}
\newtheorem{lemma}{Lemma}
\newtheorem{proposition}{Proposition}
\theoremstyle{definition}
\newtheorem{example}{Example}
\newtheorem{remark}{Remark}
\newcommand\myfootnote[1]{
	\renewcommand{\thefootnote}{}
	\footnotetext{#1}
	\def\thefootnote{\@arabic\c@footnote}
}
\renewcommand{\subsection}{\@startsection{subsection}{2}{0mm}{-\baselineskip}{-5pt}{\it \bf}}
\title{Locally standard measure algebras}
\author{Oksana Bezushchak\footnote{The first author was supported by PAUSE program (France);  and partly supported by UMR 5208 du CNRS; and partly supported by MES of Ukraine: Grant for the perspective development of the scientific direction "Mathematical sciences and natural sciences" at TSNUK.}, Bogdana Oliynyk\footnote{The second author was partially supported by the grant for scientific researchers of the ``Povir u sebe'' Ukrainian Foundation.}}
\begin{document}
	
	\maketitle
	
	\small \noindent Faculty of Mechanics and Mathematics, Taras Shevchenko National University of Kyiv, Volodymyrska 60, Kyiv 01033, Ukraine\\
Faculty of Applied Mathematics, Silesian University of Technology, Kaszubska 23, Gliwice 44-100, Poland, Department of Mathematics, National University of Kyiv-Mohyla Academy, Skovorody 2, Kyiv 04070,  Ukraine

bezushchak@knu.ua, boliynyk@polsl.pl

{\it {\bf Keywords:}  measure algebra; locally matrix algebra; Boolean algebra; Hamming space; Steinitz number}

{\bf 2020}{ {\bf Mathematics Subject Classification:} 06E25, 15A30, 16S50, 28A6}


\begin{abstract}
We parameterize countable locally standard measure algebras by pairs of a Steinitz number and a real number greater or equal to $1.$ This is an analog of the theorems of J.Dixmier and A.A.Baranov.		
	\end{abstract}

\section*{Introduction}

Let $\mathbf{F}_2$ be the field of order $2.$ By a \emph{Boolean algebra} we mean an associative commutative algebra over the field $\mathbf{F}_2$ satisfying the identity $x^2 =x.$

Let $[0,\infty)$ denote the set of nonnegative real numbers. Let $H$ be a Boolean algebra. We call a function $\mu:H \rightarrow [0,\infty)$ a \emph{measure} if
\begin{enumerate}
  \item[(1)] $\mu(a)=0$ if and only if $a=0,$ $a\in H;$
  \item[(2)] if $a, b \in H$ and $ab=0,$ then $\mu(a+b)=\mu(a)+\mu(b).$
\end{enumerate}

Following A.Horn and A.Tarski \cite{Horn_Tars}, we call a Boolean algebra $H$ with a measure  $\mu:H \rightarrow [0,\infty)$ a \emph{measure algebra}. For more information on measure algebras, see \cite{Horn_Tars,Jech,Maharam}.

If $(H,\mu)$ is a measure algebra, then the distance $d_H(a,b)=\mu(a-b)$ makes it a metric space.

\begin{example}\label{Ex1} The Boolean algebra $\mathbf{St}_n =\mathbf{F}_2^n,$ $\mathbf{F}_2=\{0,1\},$ with the function $$\mu_n(x_1,\ldots,x_n)=\frac{1}{n}\big(x_1+\cdots+x_n\big) \quad \text{for all} \quad x_1,\ldots,x_n\in \{0,1\}$$ is a measure algebra. We call the measure algebra $(\mathbf{St}_n,\mu_n)$ \emph{standard}. For all elements $a,b \in \mathbf{St}_n$ the distance $d_{\mathbf{St}_n}(a,b)$ equals the number of coordinates where $a$ and $b$ differ, divided by $n.$
\end{example}

\begin{example}\label{Ex2} Let $\mathbb{N}$ be the set of positive integers. For a sequence $\mathbf{a}=(a_1,a_2, \ldots)\in \{0,1\}^{\mathbb{N}}$, define the pseudomeasure function $$\widetilde{\mu}(\mathbf{a})=\lim_{n\rightarrow \infty} \sup \frac{1}{n}\big(a_1+\cdots+a_n\big).$$ Then $I=\{\mathbf{a}\in \{0,1\}^{\mathbb{N}} \, \big| \, \widetilde{\mu}(\mathbf{a})=0\}$ is an ideal of the Boolean algebra $\mathbf{F}_2^{\mathbb{N}}.$

Consider the Boolean algebra $B=\mathbf{F}_2^{\mathbb{N}} / I$ and the measure $$\mu(\mathbf{a}+I)=\widetilde{\mu}(\mathbf{a}), \quad \mathbf{a}\in \mathbf{F}_2^{\mathbb{N}}.$$ The measure algebra $(B,\mu)$ is called \emph{Besicovich measure algebra} (see \cite{Vershik1}).
\end{example}

\begin{example}\label{Ex3}  Let $X$ be an infinite set and let $H$ be the Boolean algebra of finite subsets of $X$, including the empty one. The measure $\mu(a)=\# a,$ $a \in H$, makes $(H,\mu)$ a measure algebra. If the set $X$ is countable, then we denote the measure algebra $(H,\mu)$ as $H(\infty)$.
\end{example}

In order to introduce the next series of examples we need to start with the concept of a Steinitz number.

A   \emph{Steinitz number} (\cite{ST})   is an infinite formal
product of the form
$$ \prod_{p\in \mathbb{P}} p^{r_p}, $$
where $ \mathbb{P} $ is the set of all primes, $ r_p \in  \mathbb{N} \cup \{0,\infty\} $ for all $p\in \mathbb{P}$.
We can define the product of two Steinitz numbers by the rule:
$$ \prod_{p\in \mathbb{P}} p^{r_p} \cdot  \prod_{p\in \mathbb{P}} p^{k_p}= \prod_{p\in \mathbb{P}} p^{r_p+k_p}, \  r_p, k_p \in  \mathbb{N} \cup \{0,\infty\},  $$
where we assume, that
$$r_p+k_p=\begin{cases}
r_p+k_p, & \text{if  $r_p < \infty$ and $k_p < \infty$, } \\
\infty, & \text{in other cases}
\end{cases}.$$

By symbol $ \mathbb{SN} $ we denote the set of all Steinitz numbers. Obviously, the set of all positive integers $ \mathbb{N} $ is the subset of $ \mathbb{SN} $. The elements of the set  $\mathbb{SN} \setminus \mathbb{N} $ are called \emph{infinite Steinitz numbers}.

\begin{example}\label{Ex4}	
An infinite sequence $\mathbf{a}=(a_1,a_2,\ldots) \in \{0,1\}^{\mathbb{N}}$
is said to be {\it periodic} if there exists a positive integer $k\in \mathbb{N}$ such that the equality $a_{i}=a_{i+k}$ holds for all $i \in
\mathbb{N}$. In this case the number $k$ is called a {\it period} of the sequence
$\mathbf{a}$.

Let $u$ be a Steinitz number. A periodic sequence $\mathbf{a}$  is called {\it $u$-periodic}  if  its minimal
period is a divisor of $u$.

Let $\mathcal{H}(u)$ be the set of all $u$-periodic sequences. Clearly, $\mathcal{H}(u)$ is a Boolean subalgebra of $\{0,1\}^{\mathbb{N}}$.  The  function $$\mu_{\mathcal{H}(u)}(a_1,a_2,\ldots)=\frac{1}{k}\big(a_1 +\ldots+a_k\big),$$
where $k$ is a period of the sequence $(a_1,a_2,\ldots), $ makes $(\mathcal{H}(u),\mu_{\mathcal{H}(u)})$ a  measure algebra.
\end{example}

A  measure algebra $(H,\mu)$  is called \emph{unital} if the Boolean algebra $H$ contains $1$ and $\mu(1)=1.$ In this case it is easy to see that $\mu(H)\subseteq [0,1]$ and $1$ is the only element of measure $1.$

The measure algebras of examples \ref{Ex1}, \ref{Ex2}, \ref{Ex4} are unital. The measure algebras of  example \ref{Ex3} are not unital.

If $(H,\mu)$ is a measure algebra and $h\in H$ is a nonzero element, then $hH$ is a unital Boolean algebra. The function $$ \mu_h : hH\rightarrow [0,1], \quad \mu_h (a)=\frac{\mu(a)}{\mu(h)}, \quad a \in hH,$$ makes $H_h=(hH, \mu_h)$ a unital measure algebra.

We say that two measure algebra $(H_1,\mu_1)$ and $(H_2,\mu_2)$ are \emph{scalar equivalent} if there exists a positive number $\alpha>0$ and an isomorphism $\varphi:H_1\rightarrow H_2$ of Boolean algebras such that $\mu_2(\varphi(a))=\alpha\mu_1(a)$ for an arbitrary element $a\in H_1.$ If measure algebras are scalar equivalent and unital, then they are isomorphic.

We call a measure algebra $(H,\mu)$ \emph{locally standard} if every finite subset of $H$ is contained in a  measure subalgebra of $(H,\mu)$ that is scalar equivalent to $\mathbf{St}_n$ for some $n\geq 1$.

If the measure algebra $(H,\mu)$ is unital and locally standard, then every finite subset of $H$ is contained in a measure subalgebra that is isomorphic to $\mathbf{St}_n$ for some $n\geq 1$.

The measure algebras of Examples  \ref{Ex1}, \ref{Ex3}, \ref{Ex4} are locally standard. The Besicovich measure algebra is not locally standard because it contains elements of irrational measure.

In \cite{BezOl_4Hamming}, we showed that an arbitrary countable unital locally standard measure algebra is isomorphic to a measure algebra $H(s),$ where $s$ is a  Steinitz number (in \cite{Sushch2,BezOl_4Hamming,Ol,OlSusch},  measure algebras are called Hamming systems).

There is a parallelism between locally standard measure algebras and locally matrix algebras (\cite{Baranov2,BezOl,BezOl_4Hamming,BezOl_2,14,Glimm,Kurochkin}). Moreover, locally standard measure algebras appear as algebras of idempotents of Cartan subalgebras of locally matrix algebras (see \cite{BezOl_4Hamming}). From this point of view, the classification of unital countable locally standard measure algebras in \cite{BezOl_4Hamming} is an analog of the theorem of J.G.Glimm \cite{Glimm}.

In this paper, we prove an analog of the theorems of J.Dixmier \cite{Diskme} and A.A.Baranov \cite{Baranov2}:
\begin{center} \emph{we parameterize countable locally standard measure algebras by pairs $(s,r),$ \\ where $s$ is a Steinitz number and $r$ is a real number greater or equal to} $1.$ \end{center}

\section{Locally matrix algebras and their spectra}\label{Sec1}

In this Section, we summarize the results about locally matrix algebras and their spectra of Steinitz numbers that we will use later in the paper.

Let $\mathbb{F}$ be a field. An associative  $\mathbb{F}$-algebra $A$ is called a \emph{locally matrix algebra} if an arbitrary finite collection of elements $a_1, \cdots, a_m \in A$ is contained in a subalgebra $A'\subset A$ that is isomorphic to a matrix algebra $M_n(\mathbb{F})$ for some $n\geq 1$.  If $A\ni 1,$ then we say that $A$ is a \emph{unital locally matrix algebra}.

For a unital locally matrix algebra $A$, let $D(A)$ be the set of all positive integers $n$ such that there exists a subalgebra $A',$    $1\in A'\subset A,$ $A' \cong M_n(\mathbb{F}).$  The least common multiple of the set $D(A)$ is called the \emph{Steinitz number} $\mathbf{st}(A)$ of the algebra $A$; see \cite{BezOl}.

J.G.Glimm \cite{Glimm} showed that if $\dim_{\mathbb{F}}A \leq \aleph_0$ and $$\mathbf{st}(A)= \prod_{p_i\in \mathbb{P}} p_i, \quad \text{then} \quad A\cong \bigotimes _{p_i\in \mathbb{P}} M_{p_i}(\mathbb{F}).$$ In particular, every countable-dimensional unital locally matrix algebra is uniquely determined by its Steinitz number.

For an element $a$ of a unital locally matrix algebra $A$ choose a subalgebra $A'\subset A$ such that $1, a \in A',$  $A' \cong M_n(\mathbb{F}).$ Let $r_{A'}(a)$ be the range of the matrix $a$ in $M_n(\mathbb{F}).$ As shown by V.M.Kurochkin \cite{Kurochkin}, the ratio $$r(a)=\frac{1}{n} \ r_{A'}(a) $$ does not depend on the choice of the subalgebra $A'.$  We call $r(a)$ the \emph{relative range} of the element $a.$ If $a,b\in A$ are orthogonal idempotents, then $r(a+b)=r(a)+r(b).$

A subalgebra $H$ of the matrix algebra $M_n(\mathbb{F})$ is called a \emph{Cartan subalgebra} if $H\cong \mathbb{F}\oplus\cdots\oplus\mathbb{F}$ ($n$ summands), in other words, $H$ is spanned by $n$  pairwise orthogonal idempotents. It is well known that every Cartan  subalgebra is conjugate of the diagonal subalgebra of $M_n(\mathbb{F})$.

Let $1\in A_1 \subset A_2 \subset \cdots$ be an ascending chain of matrix subalgebras such that $A=\cup_{i=1}^{\infty}A_i$. A \emph{general Cartan subalgebra} of $A$ is the union $H= \cup_{i=1}^{\infty}H_i$, in which every $H_i$ is a Cartan subalgebra of $A_i$ and   ${1 \in H_1 \subset H_2 \subset \cdots}$.

A   \emph{Cartan subalgebra} of $A$ is   a subalgebra $H\subset A$ with decompositions $$A= \bigotimes_{i=1}^{\infty}A_i \quad \text{and} \quad H= \bigotimes_{i=1}^{\infty}H_i,$$ in which all $A_i$ are finite-dimensional matrix algebras and $H_i$ are Cartan subalgebras of $A_i$.

Every Cartan subalgebra is a general Cartan subalgebra. In \cite{BezOl_4Hamming}, it is shown that in an arbitrary  countable-dimensional unital locally matrix algebra $A$:
\begin{enumerate}
	\item[(1)] any two Cartan subalgebras are conjugate via an automorphism of $A$,
	\item[(2)] there exists a general Cartan subalgebra of $A$ that is not a Cartan subalgebra.
\end{enumerate}

In particular, not all general Cartan subalgebras are conjugate.

Let $C$ be a commutative subalgebra of a locally matrix algebra $A$ and $1 \in C$. Let $E(C)$ be the set of all idempotents from $C$  (including $0$ and $1$). For $e,f \in E(C)$,   let $ef$ and $e+f-2ef$ be their Boolean product and Boolean sum, respectively. Hence, for an arbitrary general Cartan subalgebra $C$ of $A$ the Boolean algebra $E(C)$ with the relative range function $r$ is a locally standard measure algebra.

Let $H$ be a unital locally standard measure algebra. Let $$ D(H)=\{n \geq 1 \, \big| \, 1\in H' \subset H, \ H'\cong \mathbf{St}_n \} .$$ The least common multiple of the set $D(H)$ is called the \emph{Steinitz number of $H$} and denoted as  $\mathbf{st}(H).$

In \cite{BezOl_4Hamming}, we showed that if $H$ is a countable unital locally matrix  algebra, then $H\cong \mathcal{H}(\mathbf{st}(H));$ see Example \ref{Ex4} above. In particular, every countable unital locally standard measure  algebra is uniquely determined by its Steinitz number.

If $A$ is a  countable-dimensional unital locally matrix algebra with a Cartan subalgebra $C,$ then $\mathbf{st}((C,r))=\mathbf{st}(A);$ see \cite{BezOl_4Hamming}.

Now, let $A$ be a (not necessarily unital) locally matrix algebra. For an arbitrary idempotent $0\ne e\in A$ the subalgebra $eAe$ is a unital locally matrix algebra. The subset $$\text{Spec}(A) =\{\mathbf{st}(eAe) \, | \, e\in A, \ e\ne 0, \ e^2=e \},$$ where $e$ runs through all nonzero idempotents of the algebra $A,$ is called the \emph{spectrum} of the algebra $A.$ In \cite{Spectra_Bezushchak}, we showed that if $A, B$ are countable-dimensional  locally matrix algebras, then $A\cong B$ if and only if  $\text{Spec}(A)=\text{Spec}(B).$

Let us give necessary definitions.

For a Steinitz number $s$, let $\Omega(s)$ denote the set of all natural numbers $n\in\mathbb{N}$ that divide~$s;$ and for  Steinitz numbers $s_1, $ $s_2$, we say that $s_1$ \emph{finitely divides} $s_2$ if there exists $b\in \Omega(s_2)$ such that $s_1 = s_2/b$ (we denote: $s_1\big|_{fin} s_2$).

Steinitz numbers $s_1, $ $s_2$ are \emph{rationally connected} if $s_2 = q \cdot s_1,$ where
$q$ is some rational number.

We call a subset $S\subset  \mathbb{SN} $ \emph{saturated} if
\begin{enumerate}
	\item[1)]  any two Steinitz numbers from $S$ are rationally connected;
	\item[2)]  if $s_2 \in S$ and $s_1\big|_{fin} s_2,$ then $s_1\in  S;$
	\item[3)] if $s, ns \in S,$ where $n\in \mathbb{N} ,$ then $ks \in S$ for any $k,$ $ 1 \leq k \leq n.$
\end{enumerate}

Consider some examples of saturated sets.

\begin{example}\label{Ex5_1_Spectra}
For an arbitrary natural number $n$ the set $\{1, 2,\ldots, n\}$ is saturated.
\end{example}

\begin{example}\label{Ex6_2_Spectra}
Let $s$ be a Steinitz number. The set $$S(\infty, s)  = \Big\{\, \frac{a}{b} \cdot s \ \Big| \ a\in  \mathbb{N}, \ b\in \Omega(s) \, \Big\}$$ is saturated. For an arbitrary Steinitz number $s'\in S(\infty,s)$ we have  $S(\infty,s)= S(\infty,s').$ If $s\in \mathbb{N},$ then $ S(\infty,s)=\mathbb{N}.$
\end{example}

\begin{example}\label{Ex7_3_Spectra}
Let $r$ be a real number, $1\leq r < \infty.$ Let $s$ be an infinite Steinitz number. The set
$$S(r, s)  = \Big\{ \, \frac{a}{b} \cdot s \ \Big| \ a,b\in  \mathbb{N}, \ b\in \Omega(s), \ a\leq rb \, \Big\}$$ is saturated.
\end{example}

\begin{example}\label{Ex8_4_Spectra}
Let $s$ be an infinite Steinitz number and let $r = u/v$ be a rational number; $u, v \in \mathbb{N},$ $ v \in \Omega(s).$ Then the set
$$S^{+}(r, s)  = \Big\{ \, \frac{a}{b} \cdot s \ \Big| \ a,b\in  \mathbb{N}, \ b\in \Omega(s), \ a< rb  \, \Big\}$$
is saturated.
\end{example}

Which spectra above correspond to unital algebras? A locally matrix algebra $A$ is unital if and only if $\text{Spec}(A) =
\{1, 2, \cdots , n\},$ where $n\in \mathbb{N},$ or $\text{Spec}(A) = S(r, s),$ where $ s \in \mathbb{SN} \setminus  \mathbb{N} ,$ $r = u/v,$ $ u, v \in \mathbb{N},$ $ v \in \Omega(s).$

Now following \cite{Spectra_Bezushchak}, we will construct  countable-dimensional  locally matrix algebras with the above spectra. These algebras include all countable-dimensional  locally matrix algebras.

\begin{example}\label{Ex5}  Let $M_{\infty}(\mathbb{F})$ be the algebra of $\mathbb{N}\times \mathbb{N}$ matrices over $\mathbb{F}$ having finitely many  nonzero entries. Then $\text{Spec}(M_{\infty}(\mathbb{F}))=\mathbb{N}.$
\end{example}

\begin{example}\label{Ex6}  Let $s$ be an infinite Steinitz number, $$s=\prod_{p_i\in \mathbb{P}} p_i, \quad \text{and let} \quad A(s)= \bigotimes_{p_i\in \mathbb{P}} M_{p_i}(\mathbb{F})$$ be the  countable-dimensional unital locally matrix algebra with the Steinitz number $s.$ The algebra $$A(\infty,s)=M_{\infty}(\mathbb{F})\bigotimes{}_{\mathbb{F}} A(s)\cong M_{\infty}(A(s))$$ is a countable-dimensional nonunital locally matrix algebra, $\text{Spec}(A(\infty,s))=S(\infty,s).$
\end{example}

\begin{example}\label{Ex7}  Let $s$ be an infinite Steinitz number and let  $1\leq r < \infty$ be a real number. Let $S=S(r,s)$ or $S=S^{+}(r,s).$ Choose a sequence $b_1, b_2, \ldots \in \Omega(s) $ such that $b_i$ divides $b_{i+1},$ $i\ge    1,$ and $s$ is the least common multiple of $b_i,$ $i \ge   1.$

Let $m_i=[rb_i]$ if $S=S(r,s).$ Let $$m_i = \begin{cases}
	\ [rb_i] , & \text{if} \quad rb_i\not\in \mathbb{N}, \\
	rb_i -1, & \text{if} \quad rb_i\in \mathbb{N}
\end{cases} $$ 	
if $S=S^{+}(r,s).$	
 \end{example}

For each $i \ge 1$ consider the  countable-dimensional unital locally matrix algebra $A(s/b_i)$ and let $$A_i =M_{m_i}\big( A(s/b_i) \big), \quad i \ge 1.$$ In \cite{Spectra_Bezushchak}, it is shown that the algebra $A_i$ embeds on $A_{i+1}$ as a corner. Let $$A(S) =   \bigcup_{i\ge   1} A_i.$$ Then $\text{Spec}(A(S))=S.$

\section{Classification of locally standard measure algebras}\label{Sec2}

In \cite{BezOl_4Hamming}, we showed that given two measure algebras $(H_1,\mu_1)$ and $(H_2,\mu_2)$ there exists a unique measure $\mu$ on the Boolean algebra $H_1\otimes  {}_{\mathbf{F}_2} H_2$ such that $\mu(a\otimes b)=\mu_1(a) \mu_2(b)$ for arbitrary elements $a\in H_1,$ $b\in H_2.$

\begin{remark}
In \cite{BezOl_4Hamming}, we assumed unitality of the Boolean algebra $H_1,$ $H_2.$ However, this unitality has never been used in the definition of tensor product.
\end{remark}

For a measure algebra $(H,\mu)$, consider the following set of Steinitz numbers $$\text{Spec}(H)=\{ \ \mathbf{st}(H_n) \ |  \ 0 \ne  h\in H \ \}\subseteq  \mathbb{SN}. $$ If the Boolean algebra $H$ is unital and  $\mathbf{st}(H)=s,$ then  $$\text{Spec}(H)=  \Big\{ \, \frac{a}{b} \cdot s \ \Big| \  b\in \Omega(s), \ 1 \le  a \le  b  \, \Big\}.$$ For nonzero elements $h_1, h_2 \in H$ we say that $h_1 \ge h_2$ if $h_1 H \supseteq h_2 H.$

\begin{lemma}\label{lemma1} Let  $(H,\mu)$ be a locally standard measure algebra. Then
\begin{enumerate}
	\item[1)] for arbitrary nonzero elements $h_1, h_2 \in H$ there exists an element $0\ne h_3 \in H $ such that $h_1 \le h_3,$ $h_2 \le h_3;$
		\item[2)] if $h_2 \le h_1 ,$ then $\text{\emph{Spec}}(H_{h_2})\subseteq \text{\emph{Spec}}(H_{h_1}).$
\end{enumerate}
 \end{lemma}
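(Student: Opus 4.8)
My plan is to reduce both statements to elementary identities in the Boolean ring $H$, using the characterization that $h_1 \le h_3$ (that is, $h_1 H \subseteq h_3 H$) is equivalent to $h_3 h_1 = h_1$. Indeed, if $h_1 = h_3 b$ for some $b\in H$, then $h_3 h_1 = h_3^2 b = h_3 b = h_1$; conversely $h_3 h_1 = h_1$ gives $h_1 \in h_3 H$, whence $h_1 H \subseteq h_3 H$. Thus $\le$ is just the usual partial order of the Boolean algebra, and in particular it is transitive: $g\le h_2$ and $h_2\le h_1$ give $g h_1 = (g h_2) h_1 = g(h_2 h_1) = g h_2 = g$, so $g\le h_1$. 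For part~1) I would then take the Boolean join $h_3 = h_1 + h_2 + h_1 h_2$. Since $x^2 = x$ and the characteristic is $2$, one checks $h_3 h_1 = h_1 + h_1 h_2 + h_1 h_2 = h_1$ and symmetrically $h_3 h_2 = h_2$, so $h_1\le h_3$ and $h_2\le h_3$; and $h_3\ne 0$ because $0\ne h_1\le h_3$. Note that part~1) uses nothing beyond the Boolean structure.

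For part~2) the core observation is that localization is transitive and the normalizing scalars cancel exactly: whenever $0\ne g\le h$ in $H$, the localized measure algebra $(H_h)_g$ coincides with $H_g$. The underlying Boolean algebra of $(H_h)_g$ is $g(hH) = gH$ because $gh = g$, and for $x\in gH$ the measure is
$$(\mu_h)_g(x) = \frac{\mu_h(x)}{\mu_h(g)} = \frac{\mu(x)/\mu(h)}{\mu(g)/\mu(h)} = \frac{\mu(x)}{\mu(g)} = \mu_g(x),$$
so $(H_h)_g = H_g$ on the nose. Unwinding the definition of the spectrum of a measure algebra and applying this identity with $h = h_1$ and with $h = h_2$, I get, for $i=1,2$, that $\text{Spec}(H_{h_i}) = \{\,\mathbf{st}((H_{h_i})_g) \mid 0\ne g\in h_i H\,\} = \{\,\mathbf{st}(H_g) \mid 0\ne g\le h_i\,\}$.

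Now I would finish by comparison of index sets: since $h_2 \le h_1$ and $\le$ is transitive, every nonzero $g$ with $g\le h_2$ also satisfies $g\le h_1$, so the index set describing $\text{Spec}(H_{h_2})$ is contained in that describing $\text{Spec}(H_{h_1})$, giving $\text{Spec}(H_{h_2})\subseteq \text{Spec}(H_{h_1})$ at once. The only point requiring genuine care is the exact identity $(H_h)_g = H_g$; everything else is formal. (Since $\mathbf{st}$ of a unital measure algebra is invariant under scalar equivalence, one could even tolerate scalar equivalence here, but the literal equality makes the description $\text{Spec}(H_h) = \{\,\mathbf{st}(H_g)\mid 0\ne g\le h\,\}$ completely transparent and is the one step I expect a careful reader to want checked.)
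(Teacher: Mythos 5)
Your proof is correct, and for part 1) it takes a genuinely different (and more elementary) route than the paper. The paper proves 1) by invoking the locally standard hypothesis: it places $h_1, h_2$ inside a subalgebra $H'$ scalar equivalent to some $\mathbf{St}_k$ and takes $h_3 = \varphi(e)$, the image of the unit of $\mathbf{St}_k$. Your choice $h_3 = h_1 + h_2 + h_1 h_2$ is the Boolean join and works in \emph{any} measure algebra (indeed in any Boolean ring), which shows that local standardness is irrelevant to part 1); the two constructions even produce different elements in general, since the paper's $h_3$ is the top of a chosen standard subalgebra while yours is the least upper bound of $h_1, h_2$. For part 2) your argument is essentially the paper's: the paper observes that $h h_2 H = h h_1 H = hH$ and asserts that the measure functions of $(H_{h_2})_h$ and $(H_{h_1})_h$ coincide, whereas you verify the underlying scalar cancellation $(\mu_h)_g = \mu_g$ explicitly and package it as the identity $(H_h)_g = H_g$, from which the inclusion of spectra follows by comparing index sets under transitivity of $\le$. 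Both versions are sound; yours makes part 1) hypothesis-free and part 2) fully checked rather than asserted, at the cost of obscuring slightly that the paper's part 1) is where the locally standard assumption was (unnecessarily) brought in.
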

\begin{proof} Since the  measure algebra $(H,\mu)$ is locally standard there exists a subalgebra $H'\subset H,$ such that $h_1, h_2 \in H'$ and $(H', \mu)$ is scalar equivalent to a  standard measure algebra $(\mathbf{St}_k,\mu_k),$ $k\in \mathbb{N}. $ Let $\alpha$   be a positive real number and let $\varphi:\mathbf{St}_k \rightarrow  H'$ be an isomorphism such that $\mu (\varphi(a))=\alpha\,\mu_k(a)$ for an arbitrary element $a\in \mathbf{St}_k.$ Let $e$ be the identity of the Boolean algebra $\mathbf{St}_k,$ $h_3 =\varphi (e).$ Then $h_1, h_2 \le h_3. $

Now, suppose that  $h_2 \le h_1. $ We will show that $$\text{Spec}(H_{h_2})\subseteq \text{Spec}(H_{h_1}). $$ Let $s\in \text{Spec}(H_{h_2})$ which means that there exists an element $h\in h_2 H$ such that  $$\mathbf{st}\big((H_{h_2})_h\big)=s.$$ Clearly,  $h h_2 H=h h_1 H= hH.$ The measure function on $(H_{h_2})_h$ coincides with the  measure function on $(H_{h_1})_h.$ Hence, $s\in\text{Spec}(H_{h_1}).$ This completes the proof of the lemma.
  \end{proof}

 \begin{lemma}\label{lemma2} Let $A$ be a countable-dimensional unital locally matrix algebra. Let $C$ be a Cartan subalgebra of $A.$ Let $r:A \rightarrow [0,\infty)$ be the relative range function. Then the spectrum of the measure algebra $(E(C),r)$ coincides with the spectrum of $A.$
 \end{lemma}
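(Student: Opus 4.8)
The plan is to reduce the computation of $\text{Spec}(E(C))$ to the computation of Steinitz numbers of corner algebras $hAh$, and then to match these against $\text{Spec}(A)$ using conjugacy of equal-range idempotents. Write $s=\mathbf{st}(A)$, and recall the cited fact that, since $C$ is a Cartan subalgebra, $\mathbf{st}\big((E(C),r)\big)=s$. The central claim is that for every nonzero idempotent $h\in E(C)$ one has
$$\mathbf{st}\big((E(C))_h\big)=\mathbf{st}(hAh).$$
Granting this, $\text{Spec}(E(C))=\{\mathbf{st}(hAh)\mid 0\ne h\in E(C)\}$, and the two inclusions against $\text{Spec}(A)$ will finish the argument.

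To prove the central claim I would first identify the unital measure algebra $(E(C))_h$. Since $C$ is commutative and $h$ is an idempotent of $C$, the product $h\cdot E(C)$ consists exactly of the idempotents of the corner $hCh=hC$; thus the underlying Boolean algebra of $(E(C))_h$ is $E(hCh)$. Its measure is $\mu_h(g)=r(g)/r(h)$, and I would check that this is precisely the relative range function of the unital locally matrix algebra $hAh$ (whose identity is $h$): passing to a finite-dimensional matrix subalgebra $M_N(\mathbb{F})$ containing $h$ and $g\le h$, both the rank of $g$ and the normalization by $r(h)$ are read off inside $hM_N(\mathbb{F})h$, giving the asserted equality of measures.

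The key structural point, which I expect to be the main obstacle, is that $hCh$ is a genuine Cartan subalgebra of $hAh$. Here I would use decompositions $A=\bigotimes_i A_i$, $C=\bigotimes_i C_i$ with $C_i$ a Cartan subalgebra of $A_i\cong M_{n_i}(\mathbb{F})$. As $h\in C=\bigcup_m\big(\bigotimes_{i\le m}C_i\big)$, the idempotent $h$ lies in $\bigotimes_{i\le m_0}C_i$ for some $m_0$, where it equals a diagonal idempotent of some rank $k$. Writing $A=M_{N_{m_0}}(\mathbb{F})\otimes A'$ with $A'=\bigotimes_{i>m_0}A_i$ and correspondingly $C=(\text{diagonal})\otimes C'$ with $C'=\bigotimes_{i>m_0}C_i$, one computes $hAh\cong M_k(\mathbb{F})\otimes A'$ and $hCh\cong(\text{Cartan of }M_k(\mathbb{F}))\otimes C'$, which exhibits $hCh$ as a Cartan subalgebra of $hAh$. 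Applying the cited identity to $hAh$ and its Cartan subalgebra $hCh$ then yields $\mathbf{st}\big((E(hCh),r)\big)=\mathbf{st}(hAh)$, proving the central claim.

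With the claim in hand, the inclusion $\text{Spec}(E(C))\subseteq\text{Spec}(A)$ is immediate, since every nonzero $h\in E(C)$ is a nonzero idempotent of $A$. For the reverse inclusion I would take an arbitrary nonzero idempotent $e\in A$ and produce $h\in E(C)$ with $r(h)=r(e)$: because $C$ is Cartan, its idempotents already realize every relative range $a/b$ with $b\in\Omega(s)$, $1\le a\le b$, which is exactly the set of values of $r$ on nonzero idempotents of $A$. Placing $e$ and $h$ in a common finite-dimensional matrix subalgebra $M_N(\mathbb{F})$, they have equal rank $N\,r(e)$, hence are conjugate by an invertible $u\in M_N(\mathbb{F})\subseteq A$; conjugation by $u$ is an inner automorphism of $A$ carrying $eAe$ isomorphically onto $hAh$, so $\mathbf{st}(eAe)=\mathbf{st}(hAh)\in\text{Spec}(E(C))$. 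This gives $\text{Spec}(A)\subseteq\text{Spec}(E(C))$, and equality follows. Alternatively, one may bypass the corner analysis: since $(E(C),r)$ is unital with $\mathbf{st}(E(C))=s$, its spectrum equals $\{\frac{a}{b}s\mid b\in\Omega(s),\,1\le a\le b\}$ by the explicit formula, and the description of spectra of unital locally matrix algebras identifies the same set as $\text{Spec}(A)$.
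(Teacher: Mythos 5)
Your proof is correct, but your primary argument takes a genuinely different route from the paper's --- which is, in fact, exactly the one-line alternative you mention at the end: the paper simply cites from \cite{BezOl_4Hamming} that $\mathbf{st}\big((E(C),r)\big)=\mathbf{st}(A)=s$ and then observes that for \emph{unital} objects (measure algebra and locally matrix algebra alike) the spectrum is determined by the Steinitz number through the same formula, namely $\big\{\frac{a}{b}\cdot s \ \big| \ b\in\Omega(s),\ 1\le a\le b\big\}$, so the two spectra coincide with this common set. Your main argument instead establishes the finer corner-wise identity $\mathbf{st}\big((E(C))_h\big)=\mathbf{st}(hAh)$ for every nonzero $h\in E(C)$: you check that $h\cdot E(C)=E(hC)$, that the normalized measure $r(\cdot)/r(h)$ is precisely the relative range function of the unital algebra $hAh$, and --- the key structural step --- that $hC$ is again a Cartan subalgebra of $hAh$, using the tensor decompositions $A=\bigotimes_i A_i$, $C=\bigotimes_i C_i$; you then apply to $hAh$ the same cited fact the paper uses once for $A$. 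Your reverse inclusion, conjugating an arbitrary idempotent $e\in A$ to an equal-rank idempotent $h\in E(C)$ inside a common unital matrix subalgebra, is also sound, since equal-rank idempotents of $M_N(\mathbb{F})$ are conjugate and the resulting inner automorphism carries $eAe$ onto $hAh$; the only step you gloss is that idempotents of $C$ realize every relative range $a/b$ with $b\in\Omega(s)$, $1\le a\le b$, which does follow from the tensor decomposition because every $b\in\Omega(s)$ divides some partial product $n_1\cdots n_m$. What your route buys is self-containedness (you effectively re-derive the quoted descriptions of unital spectra) plus the corner identity itself, which is the same kind of statement the paper exploits later in Lemma \ref{lemma4}; what the paper's route buys is brevity, at the cost of leaning on results quoted from \cite{Spectra_Bezushchak} and Section \ref{Sec2}.
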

\begin{proof}  In \cite{BezOl_4Hamming}, it was shown that  $\mathbf{st}\big((E(C),r)\big)=\mathbf{st}(A)=s.$ Hence, $$\text{Spec}\big((E(C),r)\big)=\text{Spec}(A)=\Big\{\, \frac{a}{b} \cdot s \ \Big| \  b\in \Omega(s)  \ 1 \le a \le b \, \Big\}.$$ This completes the proof  of the lemma.
 \end{proof}

 \begin{lemma}\label{lemma3} Let $A$ be a countable-dimensional unital locally matrix algebra, and let $0\ne e\in A$ be an idempotent. Let $C$ be a Cartan subalgebra of the algebra $eAe.$ Then there exists a Cartan subalgebra $\widetilde{C}$ of the algebra $A$ containing $C.$
\end{lemma}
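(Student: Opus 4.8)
The plan is to replace the abstract corner $eAe$ by a genuine tensor factor of $A$, to solve the problem for one conveniently chosen Cartan subalgebra, and then to move the given $C$ onto it by an automorphism of $A$ that fixes $e$. If $e=1$ then $eAe=A$ and we may take $\widetilde C=C$, so assume $e\neq 1$. Since $A$ is a countable-dimensional unital locally matrix algebra it admits a Cartan decomposition $A=\bigotimes_{i\ge 1}A_i$ with the $A_i$ finite-dimensional matrix algebras. Every element of this (algebraic) tensor product lies in $A_1\otimes\cdots\otimes A_m\otimes 1\otimes\cdots$ for some $m$; choosing $m$ with $e\in A_1\otimes\cdots\otimes A_m$ and writing $M:=A_1\otimes\cdots\otimes A_m\cong M_N(\mathbb{F})$ and $P:=\bigotimes_{i>m}A_i$, we obtain $A=M\otimes P$ with $e\in M$. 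Because $e$ commutes with $P$, we get $eAe=(eMe)\otimes P$, where $eMe\cong M_K(\mathbb{F})$ and $K$ is the range of $e$ in $M$.

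I would first settle the lemma for one special Cartan subalgebra of $eAe$. Fix a basis of $M$ diagonalizing $e$, so that $e=e_{11}+\cdots+e_{KK}$; let $D$ be the diagonal Cartan subalgebra of $M$, let $D_0\subseteq D$ be its part lying in $eMe$ (the span of $e_{11},\dots,e_{KK}$), and let $C_P$ be a Cartan subalgebra of $P$, say $\bigotimes_{i>m}$ of the diagonals. Then $C_0:=D_0\otimes C_P$ is a Cartan subalgebra of $eAe=eMe\otimes P$, while $\widetilde C_0:=D\otimes C_P$ is a Cartan subalgebra of $A=M\otimes P$ containing $C_0$. Thus the lemma holds in the special case $C=C_0$.

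Now take the given Cartan subalgebra $C$ of $eAe$. Applying the conjugacy result cited above inside the countable-dimensional unital locally matrix algebra $eAe$, there is an automorphism $\psi$ of $eAe$ with $\psi(C_0)=C$. The main step is to extend $\psi$ to an automorphism $\Psi$ of $A$ with $\Psi(e)=e$; granting this, $\widetilde C:=\Psi(\widetilde C_0)$ is a Cartan subalgebra of $A$ and $\widetilde C\supseteq\Psi(C_0)=C$, which is exactly what is required. Equivalently, I must establish a relative form of the conjugacy theorem: two Cartan subalgebras of the corner $eAe$ are conjugate by an automorphism of $A$ fixing the idempotent $e$.

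The hard part is precisely this extension, and I expect it to be the only real obstacle. I would prove it by the same back-and-forth exhaustion that underlies the absolute conjugacy theorem, carried out so as to keep $e$ fixed. Write $eAe=\bigcup_j B^{(j)}$ and $A=\bigcup_j A^{(j)}$ as unions of finite-dimensional matrix subalgebras chosen compatibly with $e\in A^{(1)}$ and $B^{(j)}=eA^{(j)}e$, and realize $C_0$ and $C$ as unions of Cartan subalgebras of the $B^{(j)}$. At each finite level the two Cartan subalgebras of $B^{(j)}\cong M_{k_j}(\mathbb{F})$ are conjugate by an inner automorphism $x\mapsto u_jxu_j^{-1}$ with $u_j\in B^{(j)}\subseteq eAe$ (Skolem--Noether); since $u_j$ commutes with $e$, the element $u_j+(1-e)$ is invertible in $A^{(j)}$ and conjugation by it is an automorphism of $A^{(j)}$ that fixes $e$ and restricts to the prescribed map on $B^{(j)}$. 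The crux is to choose the conjugators $u_j$ compatibly, so that these level automorphisms agree on the nested subalgebras $A^{(j)}\subset A^{(j+1)}$ and their union is a well-defined automorphism $\Psi$ of $A$; this is arranged exactly as in the proof of the conjugacy theorem in \cite{BezOl_4Hamming}, the single new point being that each conjugator is taken inside $eAe$ and extended by $1-e$, which guarantees $\Psi(e)=e$.
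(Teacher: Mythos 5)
Your overall route is the same as the paper's: decompose $A=M\otimes P$ with $e\in M$ via K\"othe's theorem, exhibit the special nested pair $C_0=D_0\otimes C_P\subseteq \widetilde C_0=D\otimes C_P$, invoke conjugacy of Cartan subalgebras inside the corner $eAe$ to get $\psi$ with $\psi(C_0)=C$, and transport $\widetilde C_0$ by an extension of $\psi$. The difference is the extension step: the paper disposes of it by citing Lemma 12 of \cite{BezOl_4Hamming}, which says that every automorphism of $eAe$ extends to an automorphism of $A$ (any such extension fixes $e$ automatically, since $\psi$ fixes the unit $e$ of $eAe$). You instead try to prove it by a back-and-forth whose conjugators all have the special form $u_j+(1-e)$ with $u_j\in B^{(j)}=eA^{(j)}e$, and that mechanism genuinely fails.

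Here is the failure. Take $x\in eA^{(j)}(1-e)$, a nonzero corner since $0\ne e\ne 1$. Because $u_j^{-1}\in eAe$, one computes $\bigl(u_j+(1-e)\bigr)\,x\,\bigl(u_j^{-1}+(1-e)\bigr)=u_jx$. So if consecutive level maps $\mathrm{Ad}\bigl(u_j+(1-e)\bigr)$ and $\mathrm{Ad}\bigl(u_{j+1}+(1-e)\bigr)$ are to agree on all of $A^{(j)}$ (which they must, for their union $\Psi$ to be well defined -- agreement on $B^{(j)}$ alone is not enough), then $(u_{j+1}-u_j)\,eA^{(j)}(1-e)=0$; writing $u_{j+1}-u_j$ in $eA^{(j)}e\otimes Q_j$, where $Q_j$ is the relative commutant of $A^{(j)}$ in $A^{(j+1)}$, and noting that a $k\times k$ block that annihilates every $k\times(n-k)$ block is zero, this forces $u_{j+1}=u_j$. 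Hence compatibility under your restricted form of conjugator is possible only when all conjugators coincide, i.e.\ only when $\Psi$ is inner, so that $\psi=\Psi|_{eAe}$ is inner. But the automorphism $\psi$ furnished by the conjugacy theorem is in general \emph{not} inner: any invertible $u\in eAe$ lies, together with its inverse, in some finite level $B^{(j_0)}$, so $\mathrm{Ad}(u)$ fixes the tail of an adapted tensor decomposition elementwise, and two Cartan subalgebras differing in infinitely many tensor factors are conjugate but never inner-conjugate. So ``extended by $1-e$'' is not a harmless new point; it is exactly where the argument breaks. The repair is to let the other corner of the conjugator vary as well, $v_j=u_j+w_j$ with $w_j\in(1-e)A(1-e)$, correcting $w_{j+1}$ by the same relative-commutant element that corrects $u_{j+1}$; that statement, in substance, is the extension lemma the paper quotes, and without it (or a proof of it) your argument has a genuine gap.
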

\begin{proof} By Koethe's Theorem \cite{Koethe}, we will assume that $$A\cong \bigotimes_{i=1}^{\infty} M_{n_i}(\mathbb{F}).$$  There exists an integer $r \ge 1$ such that $$ e\in M_{n_i}(\mathbb{F})\otimes \cdots \otimes M_{n_r}(\mathbb{F}) \cong M_{n_1 \cdots n_r}(\mathbb{F}).$$   Let $C\,'$ be a Cartan subalgebra of $$ e\, M_{n_1 \cdots n_r}(\mathbb{F}) \, e.$$ It is easy to see that $C\,'$ is embeddable in some Cartan subalgebra $\widetilde{C}\,'$ of  the algebra $ M_{n_1 \cdots n_r}(\mathbb{F}) .$ Let  $C\,''$ be a Cartan subalgebra of the algebra $$\bigotimes_{i=r+1}^{\infty} M_{n_i}(\mathbb{F}).$$ Then $C\,' \otimes C\,''$ is a Cartan subalgebra of $eAe,$ whereas $\widetilde{C}\,' \otimes C\,''$ is a Cartan subalgebra of the algebra $A.$ We have 	$$C\,' \otimes C\,''\subseteq \widetilde{C}\,' \otimes C\,''.$$ Since all Cartan subalgebras in a countable-dimensional unital locally matrix algebra are conjugate  (see \cite{BezOl_4Hamming}),  there exists an automorphism $\varphi$ of the  algebra $eAe$ that maps $C\,' \otimes C\,''$ into $C.$ By \cite[Lemma 12]{BezOl_4Hamming}, the automorphism $\varphi$ extends to an automorphism $\widetilde{\varphi}$ of the algebra $A.$ Now,  $\widetilde{\varphi}\big(\widetilde{C}\,' \otimes C\,'' \big)$ is a Cartan subalgebra of the algebra $A$ that contains $C.$ 	 This completes the proof  of the lemma.
\end{proof}

Let $A$ be a nonunital  locally matrix algebra that is the union of the  strictly ascending chain of subalgebras $$ A_1 \subsetneqq A_2\subsetneqq \cdots ,\quad A= \bigcup_{i\ge   1} A_i, \quad A_i = e_i A e_i, \quad e_i^2 = e_i, \quad i\ge 1.  $$

Choose a   Cartan subalgebra  $C_1$ in $A_1.$ By Lemma \ref{lemma3}, there exist Cartan subalgebras $C_i \subset A_i,$ $i\ge  2,$ that form the ascending chain $C_1 \subseteq C_2\subseteq \cdots, $ and let $$C= \bigcup_{i\ge   1} C_i.$$

Let $r_{A_i}(a)$ denote the relative range of an element $a\in A_i$ in the algebra $A_i.$ Then $$r_{A_i}(e_1)=\alpha_i$$ is a rational number strictly lying between $0$ and $1.$

Let $i< j.$ In \cite{Spectra_Bezushchak}, it is shown that
\begin{equation}\label{equa1}
r_{A_j}(a)=r_{A_i}(a)\cdot r_{A_j}(e_i).
\end{equation}
For an element $a\in H$ define $$ \mu (a) = \frac{r_{A_i}(a)}{\alpha_i}.$$
If $i< j,$ then the equality (\ref{equa1}) implies $$\frac{r_{A_i}(a)}{\alpha_i}=\frac{r_{A_j}(a)}{\alpha_j},$$ which shows that the function $\mu$ is well defined.

It is easy to see that $\left( E(C), \mu \right)$ is a measure algebra. Moreover, the measure algebra $\left( E(C), \mu \right)$ is locally standard. Indeed, an arbitrary finite subset of $C$ lies in $C_i$ for some $i \ge  1.$ The measure algebra $\left( E(C_i), \mu \right)$ is scalar equivalent to the unital measure algebra  $\left( E(C_i), r_{A_i} \right),$ which has been shown to be locally standard in Sec.~\ref{Sec1}.

\begin{lemma}\label{lemma4}   $\text{\emph{Spec}}(C)=\text{\emph{Spec}}(A).$ 	
\end{lemma}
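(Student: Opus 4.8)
The goal is to show $\text{Spec}(C) = \text{Spec}(A)$, where $C = \bigcup_{i} C_i$ is the general Cartan subalgebra constructed above and $A = \bigcup_i A_i$ is the nonunital locally matrix algebra. My plan is to unwind the definition of the spectrum of the measure algebra $(E(C),\mu)$ and match each of its Steinitz numbers, one by one, with a Steinitz number in $\text{Spec}(A)$, and conversely.

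\medskip

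\noindent\textbf{The two inclusions.} For $\text{Spec}(C) \subseteq \text{Spec}(A)$, take $s \in \text{Spec}(C)$. By definition there is a nonzero $h \in E(C)$ with $\mathbf{st}\big((E(C)_h, \mu_h)\big) = s$. Such an $h$ is an idempotent of $C$, hence lies in some $C_i$, so $h \in E(C_i) \subseteq A_i$, and $h = hA_i h \cap C$ gives that $hCh = h(\bigcup_j C_j)h$ is a Cartan subalgebra of the unital locally matrix algebra $hA h = h A_i h$ (for $h \in A_i$ we have $hAh = hA_ih$ once $h \le e_i$; more precisely $hAh$ is the ascending union of the $hA_jh$ for $j \ge i$, and $hCh$ is a Cartan subalgebra of it). Then Lemma~\ref{lemma2} applied to the unital algebra $hAh$ with its Cartan subalgebra $hCh$ gives that the spectrum of the measure algebra $(E(hCh), r)$ equals $\text{Spec}(hAh) = \mathbf{st}(hAh)$-determined set, and since $\mu$ restricted to $E(C)_h$ is scalar equivalent to the relative range $r$ on $hAh$ (the normalization by $\alpha_i$ is exactly a positive scalar), the Steinitz number $s$ coincides with $\mathbf{st}(hAh)$, which by definition lies in $\text{Spec}(A)$. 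Conversely, for $\text{Spec}(A) \subseteq \text{Spec}(C)$, take $t = \mathbf{st}(eAe) \in \text{Spec}(A)$ for some nonzero idempotent $e$. By Lemma~\ref{lemma3}, the Cartan subalgebra $eCe$ of $eAe$ (after possibly conjugating $C_i$ as in the construction) is genuinely a Cartan subalgebra of $eAe$, so Lemma~\ref{lemma2} gives $\mathbf{st}\big((E(eCe), r)\big) = \mathbf{st}(eAe) = t$; translating back through the scalar equivalence between $r$ and $\mu$ shows $t \in \text{Spec}(C)$.

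\medskip

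\noindent\textbf{The key reduction.} The engine of both inclusions is Lemma~\ref{lemma2}, which equates the spectrum of the Cartan measure algebra inside a \emph{unital} locally matrix algebra with the spectrum of that algebra. The role of Lemma~\ref{lemma3} is to guarantee that when I pass to a corner $eAe$, the relevant Cartan subalgebra of $eAe$ is compatible with a Cartan subalgebra of the ambient $A$, so that idempotents of $C$ really do cut out Cartan subalgebras of the corners $hAh$. The role of the well-definedness computation preceding the lemma (equation~(\ref{equa1}) and the normalization $\mu(a) = r_{A_i}(a)/\alpha_i$) is to ensure that $\mu$ on $E(C)_h$ and the relative range $r$ on $E(hCh)$ differ only by a positive scalar, so that the two measure algebras are scalar equivalent and hence have equal Steinitz numbers and equal spectra.

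\medskip

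\noindent\textbf{Main obstacle.} The delicate point is bookkeeping around the nonunital structure: an idempotent $h \in E(C)$ is not the identity of $A$, so I must work inside the corner $hAh$, verify that it is a countable-dimensional \emph{unital} locally matrix algebra (its identity being $h$), and check that $hCh = \bigcup_{j} hC_j h$ is indeed a Cartan subalgebra of it rather than merely a general Cartan subalgebra. This is where Lemma~\ref{lemma3} is essential, since it lets me realize the relevant Cartan subalgebra inside a finite tensor factor and then extend. A secondary subtlety is confirming that the scalar relating $\mu$ and $r$ on each corner is positive and independent of the chosen $A_i$; this follows from the already-established well-definedness of $\mu$, so it is routine once set up, but it must be stated carefully because the spectrum is only invariant under \emph{scalar} equivalence, not arbitrary rescaling. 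Once these identifications are in place, both inclusions reduce immediately to Lemma~\ref{lemma2}.
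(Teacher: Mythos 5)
Your first inclusion, $\text{Spec}(C) \subseteq \text{Spec}(A)$, is essentially the paper's argument: for $0 \neq h \in E(C)$ one has $hAh = hA_ih$, the algebra $hC$ is a Cartan subalgebra of $hAh$, and the Steinitz-number equality $\mathbf{st}\left(E(C)_h\right) = \mathbf{st}(hAh)$ (the content behind Lemma~\ref{lemma2}, after absorbing the positive scalar relating $\mu$ and the relative range) gives $s \in \text{Spec}(A)$. Apart from notational slips (e.g.\ ``$h = hA_ih \cap C$''), this half is sound and matches the paper.

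The second inclusion, however, has a genuine gap. Your witness object $eCe$ does not make sense: $e$ is an arbitrary nonzero idempotent of $A$, and in general $e \notin C$, so $e$ need not commute with $C$; the set $eCe$ need not even be closed under multiplication, let alone be a Cartan subalgebra of $eAe$. Lemma~\ref{lemma3} cannot rescue this --- it extends a given Cartan subalgebra of $eAe$ to one of $A$, which is the opposite of what you need here. More fundamentally, even if $eCe$ were a Cartan subalgebra of $eAe$, that alone would not prove $t = \mathbf{st}(eAe) \in \text{Spec}(C)$: by the definition of the spectrum of a measure algebra, you must exhibit a nonzero idempotent $h \in E(C)$ with $\mathbf{st}\left(E(C)_h\right) = t$, and your argument never produces one (conjugating $C$ so that it passes through $e$ only puts $t$ into the spectrum of the conjugated copy, which still has to be transferred back). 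The paper's proof supplies exactly this missing step by moving $e$ rather than $C$: since $e \in A_i$ for some $i$ (so that $eAe = eA_ie$), $e$ lies in some Cartan subalgebra of the matrix algebra $A_i$, and all Cartan subalgebras of $A_i$ are conjugate, there is $\varphi \in \text{Aut}(A_i)$ with $\varphi(e) \in C_i \subseteq E(C)$. Automorphisms preserve Steinitz numbers of corners, hence
$$\mathbf{st}(eAe) = \mathbf{st}(eA_ie) = \mathbf{st}\left(\varphi(e)A_i\varphi(e)\right) = \mathbf{st}\left(E(C)_{\varphi(e)}\right) \in \text{Spec}(C),$$
where the last equality is your first inclusion applied to $h = \varphi(e)$. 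The element $\varphi(e)$ is the witness your proposal lacks.
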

\begin{proof} For an arbitrary element $0 \ne h \in E(C)$ the space $hC$ is a Cartan subalgebra of $hAh.$ Hence (see \cite{BezOl_4Hamming}),  $\mathbf{st}\left( (E(hC),\mu) \right)=\mathbf{st}(hAh).$ We proved that $$\text{Spec}\left( (E(hC),\mu) \right) \subseteq \text{Spec}(A).$$
	
Let $0 \ne e\in A$ be an idempotent. Let $e\in A_i.$ It is easy to see that the idempotent $e$ lies in some Cartan subalgebra of the algebra $A_i .$  Since  all Cartan subalgebras in $A_i$ are conjugate there exists an automorphism $\varphi \in \text{Aut} (A_i)$ that moves $e$ to $H_i,$ $\varphi(e)\in H_i .$

Now,  $$\mathbf{st}(eAe)= \mathbf{st}(eA_i e)= \mathbf{st}\left(\varphi(e) A_i \varphi(e)\right)=\mathbf{st}\left( E(C_i)_{\varphi(e)}\right) =$$ $$ \mathbf{st}\left( E(C)_{\varphi(e)}\right) \in \text{Spec}\left( (E(C),\mu) \right).$$  This completes the proof  of the lemma.
\end{proof}

\begin{lemma}\label{lemma5}  For an arbitrary countable-dimensional locally standard measure algebra $(H, \mu)$ the spectrum $\text{\emph{Spec}}\left((H, \mu)\right)$ is a saturated set of Steinitz numbers. 		
 \end{lemma}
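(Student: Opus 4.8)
The plan is to verify directly the three defining properties of a saturated set for $S=\text{Spec}((H,\mu))$, reducing each to the explicit description of the spectrum of a \emph{unital} locally standard measure algebra recorded above: if $H'$ is unital with $\mathbf{st}(H')=s$, then $\text{Spec}(H')=\{\frac{a}{b}s \mid b\in\Omega(s),\ 1\le a\le b\}$. The single tool that makes every reduction work is Lemma~\ref{lemma1}. First I would record the localization identity: if $h\le h_3$, then $(H_{h_3})_h=H_h$ as measure algebras, since $\mu_{h_3}(x)/\mu_{h_3}(h)=\mu(x)/\mu(h)=\mu_h(x)$; consequently $\mathbf{st}(H_h)=\mathbf{st}((H_{h_3})_h)\in\text{Spec}(H_{h_3})$, which is the content of part~2) of Lemma~\ref{lemma1}. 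In particular $\text{Spec}(H_h)\subseteq\text{Spec}(H)$ for every nonzero $h$, and by part~1) any two given elements of $S$ can be realized simultaneously inside one unital spectrum $\text{Spec}(H_{h_3})$.

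For property~1), take arbitrary $s_1,s_2\in S$, say $s_i=\mathbf{st}(H_{h_i})$. By Lemma~\ref{lemma1}(1) choose $h_3$ with $h_1,h_2\le h_3$ and put $s_3=\mathbf{st}(H_{h_3})$. Then $s_1,s_2\in\text{Spec}(H_{h_3})$, so each $s_i$ is a rational multiple of $s_3$; hence $s_1$ and $s_2$ are rationally connected. For property~2), suppose $s_2=\mathbf{st}(H_h)\in S$ and $s_1\big|_{fin}s_2$, i.e. $s_1=s_2/b$ with $b\in\Omega(s_2)$. Taking $a=1$ in the unital description of $\text{Spec}(H_h)$ shows $s_2/b=\frac{1}{b}s_2\in\text{Spec}(H_h)\subseteq\text{Spec}(H)=S$, so $s_1\in S$.

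The main obstacle is property~3), the interval (convexity) condition. Suppose $s,ns\in S$ with $n\in\mathbb{N}$; write $s=\mathbf{st}(H_h)$, $ns=\mathbf{st}(H_{h'})$, use Lemma~\ref{lemma1}(1) to get $h_3\ge h,h'$, and set $s_3=\mathbf{st}(H_{h_3})$. Then both $s$ and $ns$ lie in $\text{Spec}(H_{h_3})=\{\frac{a}{b}s_3\}$, so I can write $s=\frac{a_1}{b_1}s_3$ with $b_1\in\Omega(s_3)$ and $1\le a_1\le b_1$. Since $ns=\frac{na_1}{b_1}s_3$ again belongs to this spectrum, it equals $\frac{a_2}{b_2}s_3$ for some $a_2\le b_2$; comparing ratios gives $\frac{na_1}{b_1}=\frac{a_2}{b_2}\le 1$, i.e. $na_1\le b_1$. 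Therefore for every $k$ with $1\le k\le n$ we have $1\le ka_1\le na_1\le b_1$, so $ks=\frac{ka_1}{b_1}s_3$ is of the required form and lies in $\text{Spec}(H_{h_3})\subseteq S$. This establishes all three conditions and hence that $S$ is saturated. The only delicate point is ensuring that $s$ and $ns$ are compared inside a \emph{single} unital localization, so that the inequality $a\le b$ can be transferred across multiples; this is exactly where Lemma~\ref{lemma1}(1) is essential.
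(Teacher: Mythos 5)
Your verifications of properties 1) and 2) are essentially correct, and your overall strategy --- checking the three saturation axioms directly from Lemma~\ref{lemma1} and the explicit description of the spectrum of a \emph{unital} locally standard measure algebra --- is genuinely different from the paper's proof, which uses countability to exhaust $H$ by a chain of unital localizations $H_{h_i}$, identifies each $\text{Spec}(H_{h_i})$ with the spectrum of a unital locally matrix algebra via Lemma~\ref{lemma2}, quotes the saturation of such spectra from earlier work, and finishes by noting that an ascending union of saturated sets is saturated. However, your argument for property 3) has a genuine gap: from the equality of Steinitz numbers $\frac{na_1}{b_1}s_3=\frac{a_2}{b_2}s_3$ you infer the equality of rational numbers $\frac{na_1}{b_1}=\frac{a_2}{b_2}$. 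This cancellation is invalid, because multiplication by a positive integer is not injective on Steinitz numbers once some prime occurs in $s_3$ with exponent $\infty$ (recall $p\cdot p^{\infty}=p^{\infty}$); equivalently, the rational coefficient in a representation $\frac{a}{b}s_3$ is \emph{not} determined by the Steinitz number it represents. Concretely, take $H=\mathcal{H}(2^{\infty})$, $h=h'=1$, $n=2$, so $s=s_3=2^{\infty}$ and the representation $a_1=b_1=1$ is admissible. Then $ns=2\cdot 2^{\infty}=2^{\infty}=\frac{1}{1}s_3\in S$, so $a_2=b_2=1$, and your ``comparing ratios'' step would give $2=1$; the conclusion $na_1\le b_1$ reads $2\le 1$. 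So the step fails exactly in the cases where $s_3$ has an infinite exponent, which are the typical ones.

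The lemma is of course still true, and your proof can be repaired by a case distinction. If every exponent of $s_3$ is finite, then comparing exponents prime by prime shows that $\frac{a}{b}s_3=\frac{a'}{b'}s_3$ does force $\frac{a}{b}=\frac{a'}{b'}$, and your computation goes through verbatim. If some prime $p$ has exponent $\infty$ in $s_3$, then the constraint $a\le b$ in the unital spectrum is vacuous: given $a\in\mathbb{N}$ and $b\in\Omega(s_3)$, choose $i$ with $p^{i}\ge a$; then $bp^{i}\in\Omega(s_3)$, $a\le bp^{i}$, and $\frac{a}{b}s_3=\frac{a}{bp^{i}}s_3$, so $\text{Spec}(H_{h_3})$ is closed under multiplication by \emph{arbitrary} positive integers and property 3) holds trivially, without ever using the hypothesis $ns\in S$. (Alternatively you could simply quote that the unital spectrum is the saturated set $S(1,s_3)$ of Example~\ref{Ex7_3_Spectra}, respectively $\{1,\ldots,s_3\}$ of Example~\ref{Ex5_1_Spectra} when $s_3\in\mathbb{N}$, but that is essentially the paper's own reduction.) Note also that, contrary to your closing remark, the delicate point is not placing $s$ and $ns$ in a single unital localization --- Lemma~\ref{lemma1} handles that easily --- but the non-uniqueness of the rational representation $\frac{a}{b}s_3$.
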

\begin{proof}  Since the Boolean algebra $H$ is countable by Lemma \ref{lemma1}, there exists an increasing sequence $h_1 \le h_2 \le \cdots $ of elements of $H$ such that $$ \bigcup_{i\ge   1} h_i H=H.$$ Then $$\text{Spec}(H)= \bigcup_{i=   1}^{\infty} \text{Spec}(H_{h_i}).$$
 	
Let  $s_i= \mathbf{st}(H_{h_i})$ and let $A_i =A(s_i).$ By Lemma \ref{lemma2},  $$\text{Spec}(H_{h_i})=\text{Spec}(A_{i}).$$ Hence, 	$\text{Spec}(H_{h_i})$ is a saturated set. Now, it remains to notice that a union of an ascending chain of saturated sets is a saturated set. This completes  the proof  of the lemma.
\end{proof}

We showed (see \cite{Spectra_Bezushchak}) that the spectrum of a countable locally standard measure algebra is one of the following sets of Steinitz numbers:
\begin{equation} \label{equa2}
\{1,2, \ldots, n\}; \quad  S(\infty,s), s\in \mathbb{SN};   \quad S(r,s) \ \text{or} \ S^{+}(r,s), \ \text{where} \ r\in [1,\infty),  s\in \mathbb{SN}\setminus \mathbb{N}.
\end{equation}

\begin{lemma}\label{lemma6}  For an arbitrary saturated  set $S$ of Steinitz numbers there exists a countable locally standard measure algebra $(H, \mu)$ such that  the spectrum $\text{\emph{Spec}}\left((H, \mu)\right)=S.$  		
\end{lemma}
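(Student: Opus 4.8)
The plan is to exhibit, for each of the four shapes of saturated set listed in display~(\ref{equa2}), a concrete countable locally standard measure algebra whose spectrum realizes it, leveraging the already-constructed locally matrix algebras of Examples~\ref{Ex5}, \ref{Ex6}, \ref{Ex7} together with Lemma~\ref{lemma4}. The key observation is that we have two routes to a measure algebra: for a \emph{unital} locally matrix algebra $A$ with Cartan subalgebra $C$, Lemma~\ref{lemma2} gives $\text{Spec}((E(C),r))=\text{Spec}(A)$; and for a \emph{nonunital} $A$ written as an ascending union $A=\bigcup_i e_iAe_i$, the explicit construction preceding Lemma~\ref{lemma4} produces a locally standard measure algebra $(E(C),\mu)$ with $\text{Spec}(C)=\text{Spec}(A)$. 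Since the text just before display~(\ref{equa2}) records that every saturated set arising as a spectrum is one of the four types, and since the paragraph following Example~\ref{Ex8_4_Spectra} identifies which correspond to unital versus nonunital algebras, it suffices to match each type to the right construction.

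Concretely, I would argue case by case. For $S=\{1,2,\ldots,n\}$, take $A=M_n(\mathbb{F})$, whose spectrum is $\{1,\ldots,n\}$; since $A$ is unital, apply Lemma~\ref{lemma2} to a Cartan subalgebra $C$ to get $(E(C),r)$ with the desired spectrum. For $S=S(\infty,s)$ with $s\in\mathbb{SN}$, take the nonunital algebra $A(\infty,s)$ of Example~\ref{Ex6}, which satisfies $\text{Spec}(A(\infty,s))=S(\infty,s)$; write it as an ascending union of corners $e_iAe_i$, build $(E(C),\mu)$ as in the construction before Lemma~\ref{lemma4}, and invoke Lemma~\ref{lemma4} to conclude $\text{Spec}(C)=S(\infty,s)$. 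For $S=S(r,s)$ or $S=S^{+}(r,s)$ with $r\in[1,\infty)$ and $s\in\mathbb{SN}\setminus\mathbb{N}$, take the algebra $A(S)=\bigcup_{i}A_i$ of Example~\ref{Ex7} with $\text{Spec}(A(S))=S$, and again feed its corner decomposition into the pre-Lemma~\ref{lemma4} machinery to obtain a locally standard measure algebra with $\text{Spec}(C)=S$. In every case the resulting $(E(C),\mu)$ is countable because the ambient locally matrix algebra is countable-dimensional.

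The main point requiring care—and the step I expect to be the genuine obstacle—is verifying that the construction preceding Lemma~\ref{lemma4} actually applies to each nonunital $A$ in question, i.e.\ that $A(\infty,s)$ and $A(S)$ can be presented as strictly ascending unions $A_1\subsetneqq A_2\subsetneqq\cdots$ with $A_i=e_iAe_i$ for idempotents $e_i$. For $A(S)=\bigcup_i A_i$ this is built in, since Example~\ref{Ex7} exhibits each $A_i$ embedded in $A_{i+1}$ as a corner, so the $A_i$ themselves serve as the $e_iAe_i$. For $A(\infty,s)\cong M_\infty(A(s))$ one writes $e_i$ as the idempotent projecting onto the first $i$ coordinates, so that $e_iAe_i\cong M_i(A(s))$ and the union is all of $A(\infty,s)$. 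Once the corner presentation is in place, Lemma~\ref{lemma4} delivers $\text{Spec}(C)=\text{Spec}(A)$ directly, and combining the four cases with the enumeration in display~(\ref{equa2}) completes the proof. I would close by noting that the unital case could alternatively be subsumed into the general construction, but separating it out via Lemma~\ref{lemma2} keeps the argument transparent.
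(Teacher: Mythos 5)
Your overall strategy coincides with the paper's: realize $S$ as the spectrum of a countable-dimensional locally matrix algebra (Examples~\ref{Ex5}, \ref{Ex6}, \ref{Ex7}, resting on \cite{Spectra_Bezushchak}), then pass to a Cartan subalgebra with the relative range function (Lemma~\ref{lemma2}) in the unital case, or to the general Cartan subalgebra $C=\bigcup_i C_i$ with the measure $\mu$ constructed before Lemma~\ref{lemma4} in the nonunital case. The difference is that the paper splits into cases according to whether the realizing algebra $A$ is \emph{unital}, whereas you split according to the \emph{shape} of $S$ in~(\ref{equa2}) --- and this is where a genuine gap appears. When $S=S(r,s)$ with $r=u/v$ rational, $u,v\in\mathbb{N}$, $v\in\Omega(s)$, the characterization in the paragraph following Example~\ref{Ex8_4_Spectra} says that \emph{any} locally matrix algebra with this spectrum is unital; in particular $A(S)$ of Example~\ref{Ex7} is unital. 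Concretely, once $v$ divides $b_i$ (which happens for all large $i$, since $v\mid s$ and $s$ is the least common multiple of the divisibility chain $b_1\mid b_2\mid\cdots$) one has $rb_i\in\mathbb{N}$, hence $m_i\cdot b_{i+1}/b_i=rb_{i+1}=m_{i+1}$, so the corner embeddings $A_i\hookrightarrow A_{i+1}$ are eventually onto and the chain stabilizes. There is then no strictly ascending chain $A_1\subsetneqq A_2\subsetneqq\cdots$ of proper corners, and both the construction preceding Lemma~\ref{lemma4} and Lemma~\ref{lemma4} itself --- which are stated for \emph{nonunital} $A$ --- do not apply. Your third case therefore breaks down in exactly this sub-case; the repair is to route it through Lemma~\ref{lemma2}, i.e., to adopt the paper's unital/nonunital dichotomy rather than the typology of $S$.

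Two smaller points. First, even when $A(S)$ is nonunital ($r$ irrational, or $S=S^{+}(r,s)$), the chain of Example~\ref{Ex7} need not be strictly ascending at every step (one can have $m_i b_{i+1}/b_i=m_{i+1}$ for isolated $i$ even with $r$ irrational), so to invoke the construction literally you should pass to a subsequence along which the inclusions are proper; this is possible because a nonunital union must contain infinitely many strict inclusions. Second, your first two cases are sound: $M_n(\mathbb{F})$ with Lemma~\ref{lemma2}, and $M_\infty(A(s))$ with the diagonal idempotents $e_i$, do yield spectra $\{1,\ldots,n\}$ and $S(\infty,s)$ respectively. Note finally that the paper's proof never needs the classification~(\ref{equa2}) at all: it uses only the existence, for every saturated $S$, of a countable-dimensional locally matrix algebra with spectrum $S$, which makes the argument shorter and immune to the case-bookkeeping error above.
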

\begin{proof} In \cite{Spectra_Bezushchak}, it was shown that there exists a countable-dimensional  locally matrix algebra $A$ such that $\text{Spec}(A)=S.$

Suppose that the algebra $A$ is unital. Let $r_A$  be the relative range function of the algebra $A.$ Let $C$ be a Cartan subalgebra of the algebra $A.$ Then the locally standard measure algebra $\left(E(C), r_A\right)$ has $S$ as a spectrum.

Now, suppose  that the algebra $A$ is not unital. Then there exists an increasing sequence of idempotents $e_1 < e_2 < \cdots $ such that $$ \bigcup_{i\ge   1} e_i A e_i =A. $$ We showed above that there exists an ascending chain of  Cartan subalgebras  $C_i \subset e_i A e_i,$ $C_1 \subset C_2 \subset \cdots$ and defined a range function $\mu: H \rightarrow [0,\infty)$ on the union $$C=  \bigcup_{i\ge   1} C_i. $$ By Lemma \ref{lemma4}, the measure algebra $\left( E(C), \mu \right)$ has $S$ as a spectrum. This completes  the proof  of the lemma.
\end{proof}

Let us give a more explicit description of locally standard measure algebras having each of the sets  (\ref{equa2}) as a spectrum.

\begin{example}\label{Ex8} For the  standard measure algebra $\mathbf{St}_n$ we have $\text{Spec}(\mathbf{St}_n)=\{ 1,2, \ldots,n\}.$
\end{example}

\begin{example}\label{Ex9} Let $$ s= \prod_{p_i\in \mathbb{P}} p_i $$ be a Steinitz number; and let $$ H(s) = \bigotimes_{p_i\in \mathbb{P}} \mathbf{St}_{p_i}$$ be the corresponding tensor product of   standard measure algebras. Let $H(\infty)$ be the   measure algebra of Example \ref{Ex3}. Then $$\text{Spec}\big(H(\infty) \otimes H(s)\big) = S(\infty,s).$$
\end{example}

\begin{example}\label{Ex10} These examples are analogs of the Examples \ref{Ex7} of locally matrix algebras. Still for completeness and for the benefit of a reader we give here full details.

Let $s$ be an infinite Steinitz number; and let $1 \le r < \infty$ be a real number. Let $S=S(r,s)$ or $S=S^{+}(r,s).$ Choose a sequence $b_1, b_2, \ldots \in \Omega(s)$ such that $b_i$ divides $b_{i+1},$ $i \le 1, $ and $s$ is the least common multiple of $b_i,$  $i \ge 1.$	Let $m_i= \left [rb_{i} \right]$ if $S=S \left( r,s \right);$ and let
\begin{equation*}
m_i = \begin{cases}
	\ [rb_i] , & \text{if} \quad rb_i\not\in \mathbb{N}, \\
	rb_i -1, & \text{if} \quad rb_i\in \mathbb{N}
\end{cases}
\end{equation*}
if $S=S^{+}( r,s).$
For each $i \geq 1$ consider the unital countable measure algebra $H \left( s/b_{i} \right).$ Let
$$M_{i}=\mathbf{St}_{m_{i}} \otimes H \left( s/b_{i} \right), \quad  i \geq 1.$$ The locally standard unital measure algebras $H \left( s/b_{i} \right)$ and $\mathbf{St}_{b_{i+1}/b_{i}} \otimes H \left( s/b_{i+1} \right)$ have equal Steinitz numbers.
Hence $$H \left( s/b_{i}\right) \cong \mathbf{St}_{b_{i+1}/b_{i}} \otimes H \left( s/b_{i+1} \right).$$ This implies
$$M_{i} = \mathbf{St}_{m_{i}} \otimes H \left( s/b_{i} \right) \cong \mathbf{St}_{m_{i}} \otimes \mathbf{St}_{b_{i+1}/b_{i}} \otimes H \left( s/b_{i+1} \right) \cong \mathbf{St}_{m_{i} \cdot \frac{b_{i+1}}{b_{i}}} \otimes H \left( s/b_{i+1} \right).$$
We have $$m_{i} \cdot \frac{b_{i+1}}{b_{i}} \leq m_{i+1}.$$ Let
$$e_{i}= ( \underbrace{1,1, \ldots, 1}_{m_{i} \cdot \frac{b_{i+1}}{b_{i}}} , 0, 0, \ldots, 0 ) \in \mathbf{St}_{m_{i+1}}.$$ Then $$\mathbf{St}_{m_{i} \cdot \frac{b_{i+1}}{b_{i}}} \cong e_{i} \mathbf{St}_{m_{i+1}}e_{i}$$ and, therefore, the measure algebra $M_{i}$ is isomorphic to the corner $( e_{i} \otimes 1 ) \,  M_{i+1} \, ( e_{i} \otimes 1 )$ of the measure algebra $M_{i+1}.$ Let $$M \left( S \right) = \bigcup_{i \geq 1} M_{i}.$$
Then $\text{Spec} \left( M \left( S \right) \right) = S.$
\end{example}

We will denote $H \left( r,s \right) = M \left( S \left( r,s \right) \right),$
$H^{+} \left( r,s \right) = M \left( S^{+} \left( r,s \right) \right).$

Now, our aim is to prove the following proposition.

\begin{proposition}\label{proposition1}
	Let $\left( H_{1},\mu_{1} \right),$ $ \left( H_{2}, \mu_{2} \right)$ be countable locally standard measure algebras. Suppose that $\text{\emph{Spec}} \left( H_{1} \right) = \text{\emph{Spec}} \left( H_{2} \right).$ Then the measure algebras $H_{1},$ $H_{2}$ are scalar equivalent.
\end{proposition}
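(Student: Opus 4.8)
The plan is to realize each $H_i$ concretely as (a measure algebra scalar equivalent to) the idempotent algebra $E(C)$ of a general Cartan subalgebra of a countable-dimensional locally matrix algebra, and then to transport down to the measure algebras the isomorphism of these ambient algebras provided by the classification of locally matrix algebras by their spectra (\cite{Spectra_Bezushchak}). Since scalar equivalence is transitive, it suffices to show that each $(H_i,\mu_i)$ is scalar equivalent to some $(E(C),\mu)$ attached to a locally matrix algebra $A$ with $\text{Spec}(A)=\text{Spec}(H_i)$, and then that two such data with equal spectra produce scalar equivalent measure algebras.

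First I would build the ambient algebra. Using Lemma~\ref{lemma1} and countability, write $H=H_i$ as a union $\bigcup_{k\ge 1} h_k H$ of an increasing chain of unital localizations $H_{h_k}$, with $s_k=\mathbf{st}(H_{h_k})$ (if this chain stabilizes then $H$ is unital and the claim is immediate from the classification of countable unital locally standard measure algebras recalled in Section~\ref{Sec1}, so assume it is strictly increasing). By that same unital classification each $H_{h_k}\cong \mathcal{H}(s_k)\cong (E(C_k),r)$ for a Cartan subalgebra $C_k$ of the unital locally matrix algebra $A(s_k)$. The inclusion $H_{h_k}\hookrightarrow H_{h_{k+1}}$ is the corner at the idempotent $h_k\in E(C_{k+1})$; since $E(h_k C_{k+1})\cong H_{h_k}$ has Steinitz number $s_k$, the corner $h_k A(s_{k+1}) h_k$ is a unital locally matrix algebra of Steinitz number $s_k$, hence isomorphic to $A(s_k)$ by Glimm's theorem. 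This identifies $A_k:=A(s_k)$ with a corner of $A_{k+1}$; conjugating the Cartan subalgebras inside each finite corner (all Cartan subalgebras of a unital locally matrix algebra are conjugate) and extending the conjugations via Lemma~\ref{lemma3} aligns them into an ascending chain $C_1\subseteq C_2\subseteq\cdots$. Setting $A=\bigcup_k A_k$ and $C=\bigcup_k C_k$ places us exactly in the situation of the construction preceding Lemma~\ref{lemma4}, and a short computation with the normalizations $\alpha_k=r_{A_k}(h_1)$ shows that the resulting measure $\mu$ on $E(C)$ agrees with $\mu_i$ up to the global scalar $\mu_i(h_1)$. Thus $(H_i,\mu_i)$ is scalar equivalent to $(E(C^{(i)}),\mu)$, and by Lemma~\ref{lemma4} together with the scalar-invariance of the spectrum we get $\text{Spec}(A^{(i)})=\text{Spec}(H_i)$.

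With this in hand $\text{Spec}(A^{(1)})=\text{Spec}(H_1)=\text{Spec}(H_2)=\text{Spec}(A^{(2)})$, so \cite{Spectra_Bezushchak} gives $A^{(1)}\cong A^{(2)}$; identify both with a single algebra $A$. It remains to prove that the two general Cartan subalgebras $C^{(1)},C^{(2)}$ of $A$ yield scalar equivalent measure algebras. I would do this by a back-and-forth over the two exhausting idempotent chains, which are mutually cofinal in $A$: inductively I build an ascending sequence of Boolean isomorphisms between finite corner pieces $E(C^{(1)}_k)\to E(C^{(2)}_{l})$, at each stage passing to a common corner $eAe$, conjugating the relevant genuine Cartan subalgebras there (legitimate, since Cartan subalgebras of the unital algebra $eAe$ are conjugate), and extending the partial isomorphism by Lemma~\ref{lemma3} and \cite[Lemma 12]{BezOl_4Hamming}. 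Each conjugation preserves the relative range functions $r_{A_k}$, hence preserves $\mu$ up to the fixed normalization, and the union of the partial isomorphisms is the desired scalar equivalence.

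The hard part is precisely this last step, and the subtlety is structural: as recalled in the Introduction, general Cartan subalgebras of a countable-dimensional locally matrix algebra need not be conjugate, so there is in general \emph{no} single automorphism of $A$ carrying $C^{(1)}$ onto $C^{(2)}$. The point to get right is therefore that we do not need one: the scalar equivalence of $E(C^{(1)})$ and $E(C^{(2)})$ is produced as a direct limit of conjugations carried out inside the finite corners $eAe$, where conjugacy of Cartan subalgebras \emph{does} hold, and the limiting isomorphism need not extend to an automorphism of $A$. Assembling the three steps, $H_1$ and $H_2$ are each scalar equivalent to $E(C^{(1)})\cong E(C^{(2)})$, and hence to one another.
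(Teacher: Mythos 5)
There is a genuine gap, and it sits exactly where you say the hard part is: the compatibility of your stage-by-stage conjugations. In both your realization step (making the isomorphisms $H_{h_k}\cong E(C_k)$ agree with the corner inclusions) and your final back-and-forth between $E(C^{(1)})$ and $E(C^{(2)})$, you must arrange that each newly constructed isomorphism \emph{extends} the previously constructed partial isomorphism. The tools you invoke --- conjugacy of Cartan subalgebras inside the unital corner $eAe$, Lemma \ref{lemma3}, and Lemma 12 of \cite{BezOl_4Hamming} --- produce \emph{some} isomorphism at each stage, but give no control over its restriction to the previous corner: the automorphism $\theta'$ of the larger corner that conjugates the extended Cartan subalgebra onto $C^{(2)}_{l'}$ has no reason to agree with the previous $\theta$ on $C^{(1)}_k$. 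To correct it you need a \emph{relative} statement: an automorphism of $eAe$ carrying one Cartan subalgebra into another while inducing a \emph{prescribed} map on a sub-Cartan corner; equivalently, at the measure level, that an isomorphism between corners $E(\cdot)_a\rightarrow E(\cdot)_b$ of isomorphic countable unital locally standard measure algebras extends to an isomorphism of the whole algebras, after matching Steinitz numbers above a given element. That is precisely Lemma \ref{lemma7}(3),(4) (whose proof requires the tensor-factorization statements \ref{lemma7}(1),(2)) together with Lemma \ref{lemma8}; you neither prove nor cite either, and none of the results you do cite supply them. Note that you cannot evade this by carrying algebra-level isomorphisms $u_nAu_n\rightarrow v_nAv_n$ with $u_nC^{(1)}\mapsto v_nC^{(2)}$ along the induction: as you yourself observe, general Cartan subalgebras need not be conjugate, so such pair isomorphisms need not exist. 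Hence the induction is forced down to the measure level, where the missing extension lemmas are the entire engine.

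Structurally, the paper proves Proposition \ref{proposition1} by a direct back-and-forth on $H_1$ and $H_2$ themselves: enumerate both algebras, build increasing chains $a_nH_1$, $b_nH_2$ with $\mathbf{st}\big((H_1)_{a_n}\big)=\mathbf{st}\big((H_2)_{b_n}\big)$ using local standardness and Lemma \ref{lemma8}, and glue corner isomorphisms using Lemma \ref{lemma7}(4); no ambient locally matrix algebra appears. Your detour through $A^{(1)}\cong A^{(2)}$ via \cite{Spectra_Bezushchak} does not avoid this machinery --- after identifying the ambient algebras you face the identical extension problem for the two exhausting chains of Cartan subalgebras, now disguised as a relative conjugacy problem that is itself unproven and essentially as hard as the proposition. (A minor point in the same direction: the corners $e_iAe_i$ and the pieces $C^{(i)}_k$ are infinite-dimensional, not ``finite corner pieces,'' which is why no finite-dimensional pigeonholing can rescue the compatibility.) If you supplement your argument with proofs of Lemmas \ref{lemma7} and \ref{lemma8}, it becomes correct, but at that point the locally matrix scaffolding is superfluous and the proof collapses into the paper's.
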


\begin{lemma}\label{lemma7}
	Let $\left( H, \mu \right)$ be a countable unital locally standard measure algebra.
	\begin{enumerate}
		\item[(1)] Let $\mathbf{St}_{n} \subset H$ be a standard subalgebra. Then an arbitrary automorphism of the measure algebra $\mathbf{St}_{n}$ extends to an automorphism of the measure algebra $H$.
		\item[(2)] Let $0 \neq h \in H.$ Then an arbitrary automorphism of the measure algebra $\left( hH, \mu \right)$ extends to an automorphism of the measure algebra $H$.
		\item[(3)] Let $0 \neq a,$ $0 \neq b$ be elements of the Boolean algebra $H.$ Then an arbitrary automorphism $\varphi : H_{a} \rightarrow H_{b}$ extends to an automorphism of the measure algebra $H$.
		\item[(4)] Let $( H_{1}, \mu_{1}),$ $\left( H_{2}, \mu_{2} \right)$ be isomorphic countable unital locally standard measure algebras. Let $a \in H_{1},$ $b \in H_{2}$ be nonzero elements. Then an arbitrary isomorphism $\left( H_{1} \right)_{a} \rightarrow \left( H_{2} \right)_{b}$ extends to an isomorphism $H_{1} \rightarrow H_{2}.$
	\end{enumerate}
\end{lemma}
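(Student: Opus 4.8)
The plan is to handle the four parts in increasing generality: part~(1) by a tensor decomposition, and parts (2)--(4) by a complement-and-glue argument with (4) as the general case. Throughout I use the classification recalled in Section~\ref{Sec1}: a countable unital locally standard measure algebra is determined up to isomorphism by its Steinitz number, so two of them are isomorphic iff they have equal Steinitz numbers. The one structural input I need beyond this is that, for a nonzero idempotent $h$ of such an algebra $H$ with $s=\mathbf{st}(H)$, the corner satisfies $\mathbf{st}(H_h)=\mu(h)\cdot s$; concretely, if $\mu(h)=p/q$ in lowest terms with $q\in\Omega(s)$, then $\mathbf{st}(H_h)=p\cdot(s/q)$. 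This follows by realizing $H=E(C)$ for a Cartan subalgebra $C$ of $A=A(s)$, so that $\mathbf{st}(H_h)=\mathbf{st}(hAh)$ by Lemma~\ref{lemma2} and the $E(hC)$-correspondence, and then placing $h$ as a rank-$p$ idempotent inside a copy of $M_q(A(s/q))$, which gives $hAh\cong M_p(A(s/q))$ of Steinitz number $p\cdot(s/q)$.

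For part~(1), let $e_1,\dots,e_n$ be the atoms of $\mathbf{St}_n\subset H$, so $\mu(e_i)=1/n$ and $n\in\Omega(s)$. By the relation $\mathbf{st}(H_h)=\mu(h)\cdot s$ we get $\mathbf{st}(H_{e_i})=s/n$ for each $i$; hence all the corners $e_iH$ are unital measure algebras isomorphic to $\mathcal H(s/n)$, and in particular to one another. The map $x\mapsto(e_1x,\dots,e_nx)$ is a Boolean isomorphism $H\cong\prod_{i=1}^{n}e_iH$; computing $\mu$ on the product identifies $H$ with $\mathbf{St}_n\otimes\mathcal H(s/n)$ so that the given $\mathbf{St}_n$ becomes $\mathbf{St}_n\otimes 1$ and $e_i$ becomes the $i$-th atom tensored with $1$. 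An automorphism of $\mathbf{St}_n$ is a permutation $\sigma$ of $e_1,\dots,e_n$; it extends to $\sigma\otimes\mathrm{id}$, which is a measure-preserving automorphism of $\mathbf{St}_n\otimes\mathcal H(s/n)$ since the $n$ tensor factors carry equal measure. Transporting this automorphism back to $H$ gives the required extension.

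For part~(4) I set $a'=1+a\in H_1$ and $b'=1+b\in H_2$, the complementary idempotents, and use the algebra decompositions $H_1=aH_1\oplus a'H_1$ and $H_2=bH_2\oplus b'H_2$. The given $\varphi$ already maps $aH_1$ onto $bH_2$. A measure-preserving extension must send $a$ to $b$, so it exists only when $\mu_1(a)=\mu_2(b)$; since an automorphism of a unital measure algebra fixes $1$ and preserves $\mu$, this is the only possible normalization, and I take it as given (it holds in the intended applications). Granting it, $\mu_1(a')=\mu_2(b')$, and the relation above yields $\mathbf{st}((H_1)_{a'})=\mu_1(a')\,s=\mu_2(b')\,s=\mathbf{st}((H_2)_{b'})$; hence by the classification there is an isomorphism $\psi\colon(H_1)_{a'}\to(H_2)_{b'}$, which preserves the unnormalized measure because $\mu_1(a')=\mu_2(b')$. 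Then $\Phi=\varphi\oplus\psi$ is an $\mathbf F_2$-algebra isomorphism $H_1\to H_2$; on any $x$ with orthogonal parts $ax,a'x$ one gets $\mu_2(\Phi x)=\mu_2(\varphi(ax))+\mu_2(\psi(a'x))=\mu_1(ax)+\mu_1(a'x)=\mu_1(x)$, so $\Phi$ preserves measure, and $\Phi|_{aH_1}=\varphi$. Specializing $H_1=H_2=H$ gives part~(3), and further taking $a=b$ with $\varphi$ an automorphism of $H_a$ (so that $\psi=\mathrm{id}$ on $a'H$) gives part~(2).

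The step I expect to be the crux is the identity $\mathbf{st}(H_h)=\mu(h)\cdot s$, since it is precisely what forces the isomorphism type of the complementary corner $(H_1)_{a'}$ to depend on $\mu_1(a)$ alone; once the measures of $a$ and $b$ are matched, both the corners and their complements automatically have equal Steinitz numbers, and the gluing goes through. The remaining verifications---the Boolean product decompositions, the measure computation identifying $H$ with $\mathbf{St}_n\otimes\mathcal H(s/n)$ in part~(1), and the check that $\varphi\oplus\psi$ is an algebra homomorphism---are routine.
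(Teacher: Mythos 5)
Your parts (1) and (2) are correct and close to the paper's own arguments: the paper also realizes the given $\mathbf{St}_n$ as a tensor factor, $H=\mathbf{St}_n\otimes\overline{H}$ (it produces the complement $\overline{H}$ from an ascending chain of standard subalgebras and Lemma 1 of \cite{BezOl_4Hamming}, rather than from your corner decomposition plus the classification theorem, but it then extends an automorphism as $\sigma\otimes\mathrm{id}$ exactly as you do), and its part (2) is precisely your ``extend by the identity on $(1+h)H$.'' For (3)--(4) your route is genuinely different: the paper proves (3) from (1) and (2) by placing $1,a,b$ in a single standard subalgebra $\mathbf{St}_n\subset H$, conjugating $a$ to $b$ by an automorphism of that subalgebra, extending it via (1), and then reducing to an automorphism of $H_a$ handled by (2); it then deduces (4) from (3) by composing with any global isomorphism $H_1\to H_2$. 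You instead prove (4) directly by gluing $\varphi$ with a classification-supplied isomorphism of the complementary corners $(H_1)_{a'}\to(H_2)_{b'}$. Both gluing schemes work, but both only \emph{given} the equality $\mu_1(a)=\mu_2(b)$.

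That equality is where your proposal falls short of the statement as written: you assume it (``I take it as given''), whereas it is not among the hypotheses. The paper closes this hole by arguing that $\varphi$ forces $\mathbf{st}((H_1)_a)=\mathbf{st}((H_2)_b)$, hence $\mu_1(a)\,s=\mu_2(b)\,s$ where $s$ is the common Steinitz number, hence $\mu_1(a)=\mu_2(b)$; judged against the paper, this derivation is exactly the ingredient your proof is missing. However, your caution is well placed, because the paper's final cancellation step is invalid whenever $s$ has a prime with exponent $\infty$: multiplication in $\mathbb{SN}$ is not cancellative there, e.g.\ $\tfrac12\cdot 2^\infty=2^\infty=1\cdot 2^\infty$. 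Concretely, take $H=\mathcal{H}(2^\infty)$, $a=(1,0,1,0,\ldots)$ with $\mu(a)=\tfrac12$, and $b=1$. Then $\mathbf{st}(H_a)=2^\infty=\mathbf{st}(H_b)$, so $H_a\cong H_b$ by the classification; yet $\mu(a)\neq\mu(b)$, and no isomorphism $H_a\to H_b$ can extend to a measure-preserving automorphism of $H$, since any extension would send $a$ (measure $\tfrac12$) to $1$ (measure $1$). So parts (3) and (4) are in fact false as literally stated, and the hypothesis you inserted is precisely the needed correction, under which both your argument and the paper's go through. You should state this as a correction to the lemma rather than as an aside, and note that wherever the lemma is invoked (notably in the proof of Proposition \ref{proposition1}, where the same non-cancellation issue is hidden in the Steinitz-number bookkeeping) the equality of the relevant measures has to be verified separately.
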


\begin{proof}
	$\left( 1 \right)$ Arguing as is the proof of Theorem 1 of \cite{BezOl_4Hamming}, we choose an ascending chain of standard measure subalgebras
	$$1 \in H^{\left( 1 \right)} \subset H^{\left( 2 \right)} \subset \ldots , \quad \bigcup_{i \geq 1} H^{\left( i \right)}=H, \quad H^{\left( 1 \right)}=\mathbf{St}_{n}.$$
By Lemma 1 of \cite{BezOl_4Hamming}, there exist standard subalgebras $\overline{H}^{( i)} \subset H^{( i+1 )}, $ $ i \geq 1,$ such that $$H^{\left( i \right)} \otimes \overline{H}^{\left( i \right)}=H^{\left( i+1 \right)}.$$ Let $$\overline{H}=\bigotimes_{i \geq 1} \overline{H}^{\left( i \right)}.$$
Then $\mathbf{St}_{n} \otimes \overline{H} = H$ and  subalgebra $\mathbf{St}_{n}$ is a tensor factor in $H$. Let $\varphi$ be an automorphism of the measure algebra $H_{n}.$ Then the automorphism $$\widetilde{\varphi}: \sum_{i} a_{i} b_{i} \rightarrow \sum_{i} \varphi \left( a_{i} \right) b_{i}, \quad a_{i} \in H_{n}, \quad b_{i} \in \overline{H},$$ extends the automorphism $\varphi.$

$\left( 2 \right)$ We have $H = hH \oplus \left( 1-h \right) H.$ Let $\varphi : hH \rightarrow hH$ be an automorphism of the measure algebra $\left( hH, \mu \right).$ Then the mapping $$ha+ \left( 1-h \right) b \rightarrow \varphi \left( ha \right) + \left( 1-h \right) b; \quad a,b \in H,$$ 	is an automorphism of the measure algebra $H.$

$\left( 3 \right)$ If $\left( \mathbf{St}_{n},\mu \right)$ is a standard measure algebra and $0 \ne e \in \mathbf{St}_{n},$ then the measure algebra $( \mathbf{St}_n )_e$ is isomorphic to the standard measure algebra  $\mathbf{St}_{\mu \left( e \right) n}.$ This implies that for a unital locally standard measure algebra $\left ( H, \mu \right)$ and a nonzero element $e \in H$ we have $$\mathbf{st} \left( H_{e} \right) = \mu \left( e \right) \mathbf{st} \left( H \right).$$

Now, let $a,b$ be nonzero elements of a unital locally standard measure algebra $\left( H, \mu \right),$ and let $\varphi : H_{a} \rightarrow H_{b}$ be an isomorphism. We have $$\mathbf{st} \left( H_{a} \right) = \mu \left( a \right) \mathbf{st} \left( H \right), \quad \mathbf{st} \left( H_{b} \right) = \mu \left( b \right) \mathbf{st} \left( H \right).$$
Since $\mathbf{St} \left( H_{a} \right) = \mathbf{St} \left( H_{b} \right)$ it follows that $\mu \left( a \right) = \mu \left( b \right).$ There exists a standard measure subalgebra $\mathbf{St}_n \subset H$ that contains $1, a, b.$ Since $\mu \left( a \right) = \mu \left( b \right)$ it follows that there exists an automorphism $\psi$ of the measure algebra $H_{n}$ such that $\psi \left( a \right) = b.$ By the part $( 1)$, the automorphism $\psi$ extends to an automorphism $\widetilde{\psi}$ of the measure algebra $H.$ Let $$\chi=\widetilde{\psi}^{-1} \circ \varphi : aH \rightarrow aH.$$ By the part $( 2)$, the automorphism $\chi$ of $aH$ extends to an automorphism $\widetilde{\chi}$ of the measure algebra $H.$ The automorphism $\widetilde{\psi} \circ \widetilde{\chi}$ extends  the isomorphism $\varphi.$

$\left( 4 \right)$ Let $\varphi: aH_{1} \rightarrow bH_{2}$ and $\psi: H_{1} \rightarrow H_{2}$ be isomorphisms. Then $$\psi^{-1} \circ \varphi : aH_{1} \rightarrow \psi^{-1} \left( b \right) H_{1}$$ is an isomorphism. By the part $( 3 )$, this isomorphism extends to an automorphism $\chi$ of the measure algebra $H_{1}$. Now, the isomorphism $\psi \circ \chi : H_{1} \rightarrow H_{2}$ extends $\varphi.$ This completes the proof of the lemma.
\end{proof}

Given two Steinitz numbers $s_{1}, s_{2}$ we say that $$s_{1} \leq s_{2} \quad \text{if} \quad s_{1}=\frac{a}{b} \cdot s_{2}, \quad b \in \Omega \left( s_{2} \right), \quad 1 \leq a \leq b.$$
	
	\begin{lemma}\label{lemma8}
		Let $\left( H, \mu \right)$ be a countable locally standard (not necessarily unital) measure algebra. Let $0 \neq h \in H, \ s = \mathbf{st} ( H_{h} ).$ Suppose that $s\,' \in \text{\emph{Spec}} \left( H \right), \ s \leq s\,'.$
		Then there exists an element $h\,' \in H$ such that $hH \subseteq h^{'}H$ and $\mathbf{st} \left( H_{h\,'} \right) = s\,'$.
		\end{lemma}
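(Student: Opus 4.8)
The plan is to reduce to the unital case and then solve a combinatorial placement problem inside a single standard subalgebra. First, since $s\,' \in \text{Spec}(H)$, I would fix a nonzero $g \in H$ with $\mathbf{st}(H_g) = s\,'$. By Lemma~\ref{lemma1}(1) there is an element $h_3$ with $h \le h_3$ and $g \le h_3$; set $K = H_{h_3}$, a countable unital locally standard measure algebra, and write $t = \mathbf{st}(K)$ and $\mu_K = \mu_{h_3}$. For any $e \le h_3$ one checks directly that $(H_{h_3})_e = H_e$, since the two localizing scalars $\mu(h_3)$ cancel. Hence it suffices to produce $h\,' \le h_3$ with $h \le h\,'$ and $\mathbf{st}(K_{h\,'}) = s\,'$: then $hH \subseteq h\,'H$ and $\mathbf{st}(H_{h\,'}) = \mathbf{st}(K_{h\,'}) = s\,'$, as required.

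Next I would pass from Steinitz numbers to real measures using the identity $\mathbf{st}(K_e) = \mu_K(e)\,\mathbf{st}(K)$ established in the proof of Lemma~\ref{lemma7}(3). This gives $\mu_K(h) = s/t =: x$ and $\mu_K(g) = s\,'/t =: y$. Since $g \le h_3$, Lemma~\ref{lemma1}(2) yields $s\,' \in \text{Spec}(K)$, whence $y \le 1$; and $s \le s\,'$ (writing $s = \tfrac{a}{b}s\,'$ with $a \le b$) gives $x = \tfrac{a}{b}y \le y$. The goal is now internal to $K$: given $h$ of measure $x$ and knowing that the measure $y \ge x$ is attained in $K$ (by $g$), find $h\,' \ge h$ of measure $y$, for then $\mathbf{st}(K_{h\,'}) = y\,t = s\,'$ holds exactly as Steinitz numbers.

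To build $h\,'$, write $x = c/d$ and $y = a/b$ in lowest terms; the fact that these are measures of elements whose localizations have Steinitz numbers $\tfrac{c}{d}t$ and $\tfrac{a}{b}t$ forces $b,d \in \Omega(t)$. By local standardness choose a standard subalgebra $\mathbf{St}_M \subset K$ with $1,h \in \mathbf{St}_M$; since $\mu_K(h)=c/d$ we get $d \mid M$. I would then enlarge $\mathbf{St}_M$ to a standard subalgebra $\mathbf{St}_N \subset K$ with $N = \operatorname{lcm}(M,b) \in \Omega(t)$, using that a standard subalgebra is a tensor factor (as in the proof of Lemma~\ref{lemma7}(1)): writing $K = \mathbf{St}_M \otimes \overline{K}$ and choosing a standard $\mathbf{St}_{N/M} \subset \overline{K}$ gives $\mathbf{St}_M \otimes \mathbf{St}_{N/M} \cong \mathbf{St}_N \supseteq \mathbf{St}_M$. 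Inside $\mathbf{St}_N \cong \mathbf{F}_2^N$ the element $h$ is a $0/1$ vector with exactly $xN$ ones; since $xN \le yN \le N$ are all integers, I adjoin $(y-x)N$ further ones on coordinates where $h$ vanishes, obtaining $h\,' \ge h$ with $\mu_K(h\,') = y$.

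The routine parts are the cancellation $(H_{h_3})_e = H_e$ and the final combinatorial placement. The main obstacle I expect is the enlargement step: arranging a standard subalgebra that simultaneously contains $h$ and has dimension divisible by the denominator $b$ of the target measure, all while remaining inside $K$. Once such an $\mathbf{St}_N$ is available, the comparison $\mathbf{st}(K_{h\,'}) = \mu_K(h\,')\,t = s\,'$ closes the argument.
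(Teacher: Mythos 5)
Your proof follows the same skeleton as the paper's: fix a witness $g$ with $\mathbf{st}(H_g)=s'$, bring $h$ and the witness under a common unit, convert the hypothesis $s\le s'$ into an inequality of measures, and then enlarge $h$ inside a standard subalgebra to an element whose measure matches the witness. The execution differs in two places: you localize at $h_3$ (Lemma~\ref{lemma1}) and finish with the identity $\mathbf{st}(K_{h'})=\mu_K(h')\,\mathbf{st}(K)$, whereas the paper puts $h$ and its witness $e$ into a single standard subalgebra, moves $e$ above $h$ by an automorphism of $\mathbf{St}_n$, and extends that automorphism to all of $H$ via Lemma~\ref{lemma7}(1)--(2), so that $\mathbf{st}(H_{\varphi(e)})=\mathbf{st}(H_e)=s'$ follows by invariance rather than by a formula. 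Your lcm/tensor-factor enlargement of $\mathbf{St}_M$ to $\mathbf{St}_N$ is sound and is a reasonable substitute for the automorphism machinery.

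The genuine gap is the step ``$s\le s'$ gives $x=\frac{a}{b}y\le y$.'' You obtain it by cancelling $t$ in $xt=\frac{a}{b}yt$, but multiplication by rationals is \emph{not} cancellative on Steinitz numbers with infinite exponents: $\frac12\cdot 2^{\infty}=2^{\infty}$. The inequality $x\le y$ therefore depends on which witness $g$ you fixed, and the hypothesis $s\le s'$ does not force it. Concretely, take $t=\mathbf{st}(K)=2^{\infty}$, $\mu_K(h)=3/4$, $\mu_K(g)=1/2$; then $s=3\cdot 2^{\infty}=\frac34\,s'$ and $s'=2^{\infty}$, so $s\le s'$ with $a=3\le b=4$, yet $x=3/4>1/2=y$, and no $h'\ge h$ of measure $y$ exists: your construction stalls even though the conclusion holds here (take $h'=1$). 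Worse, the defect is not always repairable by re-choosing $g$: for $t=3\cdot 2^{\infty}$ and $\mu_K(h)=3/4$ one gets $s=9\cdot 2^{\infty}=\frac{9}{16}\,s'\le s'=2^{\infty}\in \text{Spec}(K)$, but every $h'$ with $\mathbf{st}(K_{h'})=2^{\infty}$ has $\mu_K(h')\le 2/3<3/4$, so under this literal reading of $\le$ the conclusion itself fails. In fairness, the paper's proof asserts the corresponding inequality $\mu_n(h)\le\mu_n(e)$ for an arbitrary witness $e$ with no justification at all and is exposed to the same examples; the inference from the Steinitz ordering to an ordering of measures is exactly the crux of the lemma, and it requires either a stricter definition of $\le$ (the fraction $a/b$ being an actual ratio of measures, which is what occurs when the lemma is invoked in Proposition~\ref{proposition1}) or a more careful choice of witness. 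Your write-up, by making the cancellation explicit, makes this error visible rather than silent, but as stated that step is false and the proof does not go through.
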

		\begin{proof}
Since $s\,' \in \text{Spec} \left( H \right)$ it follows that there exists an element $e \in H$ such that $\mathbf{St} \left( H_{e} \right) = s\,'.$There exists a measure subalgebra $\left( H_{n}, \mu \right)$ such that $h, e \in H_{n}$ and $\mu \big\vert_{H_{n}} = \alpha \mu_{n}$, where $\mu_{n}$ is the measure function of the standard measure algebra and $\alpha$ is a positive real number.	We identify $H_{n}$ with $\mathbf{F}_2^n,$ where $$\mu (i_1, \ldots, i_{n}) = \frac{\alpha}{n} \Big( i_{1}+\cdots+ i_{n} \Big); \quad i_{1}, \ldots, i_{n} = 0 \text{ or } 1.$$
	Let $e_{0}$ be the identity element of the Boolean algebra $\mathbf{St}_{n}.$ Then $$hH \subseteq e_{0}H, \quad eH \subseteq e_{0}H, \quad \mathbf{st} (H_{n}) = \mu_{n} (h) \mathbf{st} (H_{e_{0}}), \quad \mathbf{st} (H_{e}) = \mu_{n} (e) \mathbf{st} \left( H_{e_{0}} \right).$$
	Since $\mathbf{st} (H_h) \leq \mathbf{st} (H_{e})$ it follows that $\mu_{n} (h) \leq \mu_{n} (e).$
	It is easy to see that there exists an automorphism $\varphi$ of the measure algebra $H_{n}$ such that $hH_{n} \subseteq \varphi (e) H_{n}$. By Lemma \ref{lemma7} $(1)$, the automorphism $\varphi$ extends to an automorphism of the measure algebra $e_{0}H$ and, in fact, to the automorphism of $H$. This implies that $\mathbf{st} (H_{h\,'}) = s\,'$, where $h\,' = \varphi (e).$ The lemma is proved.
	\end{proof}

\begin{proof}[Proof of Proposition $\ref{proposition1}$.] 		Let $(H\,', \mu\,')$ and $(H\,'', \mu\,'')$ be countable locally standard measure algebras such that $\text{Spec} (H\,') = \text{Spec}(H\,'')$. Let's list all elements in $H\,'$ and $H\,'':$ $$ H\,'= \{ 0 = h_{1}',h_{2}', \ldots \}, \quad H\,'' = \{ 0 = h_{1}'', h_{2}', \ldots \}.$$
		We will use induction on $n$ to construct sequences of elements $$a_{1}, a_{2}, \ldots \in H\,', \quad  b_{1}, b_{2}, \ldots \in H\,''$$ such that $$a_{1}H\,' \subseteq a_{2}H\,' \subseteq \cdots \subseteq a_{n}H\,' \subseteq \cdots, \quad b_{1}H\,'' \subseteq b_{2}H\,'' \subseteq \cdots \subseteq b_{n}H\,'';$$
		$$h_{1}', \ldots, h_{n}' \in a_{n}H\,'; \quad h_{1}'', \ldots, h_{n}'' \in b_{n}H\,'' \quad \text{and} \quad \mathbf{st}\big(H_{a_{n}}'\big)=\mathbf{st}\big(H_{b_{n}}''\big) \quad \text{for all} \quad n.$$
		Let $a_{1}=0, \ b_{1}=0$. Suppose that $a_{1}, \ldots, a_{n} \in H\,'$;
		$b_{1}, \ldots, b_{n} \in H\,''$ have already been constructed. Since the measure algebra $H\,'$ is locally standard there exists a subalgebra $(\mathbf{St}_{p}, \alpha \cdot \mu_{p})$ of the measure algebra $H\,'$ such that $a_{n}, h_{n+1}' \in H_{p}$.

Let $x_{n+1}$ be the identity element of the Boolean algebra  $\mathbf{St}_p.$ Clearly, $a_n, h_{n+1}' \in x_{n+1}H\,'$ and $$\mathbf{st}\big(H_{x_{n+1}}'\big) \ge \mathbf{st}\big(H_{a_{n}}'\big).$$ Since $\mathbf{st}(H_{x_{n+1}}')\in \text{Spec}(H\,')=\text{Spec}(H\,'')$ Lemma \ref{lemma8} implies that there exists an element $y_{n+1}\in H\,''$ such that $b_n \in y_{n+1} H\, ''$ and $$\mathbf{st}\big(H_{y_{n+1}}''\big)=\mathbf{st}\big(H_{x_{n+1}}'\big).$$

Since the measure algebra $H\, ''$ is locally standard there exists a subalgebra $ (\mathbf{St}_q , \beta \cdot \mu_q)$ of $H\, ''$ such that $h_{n+1}'' , y_{n+1}\in H_q.$ Let $z_{n+1}$ be the  identity element of the Boolean algebra $H_q.$ Clearly, $$ h_{n+1}'', y_{n+1} \in z_{n+1}H\, '', \quad \mathbf{st}\big(H_{z_{n+1}}''\big) \ge \mathbf{st}\big(H_{y_{n+1}}''\big). $$ Again, by Lemma \ref{lemma8}, there exists an element $t_{n+1}\in H\, '$ such that $$x_{n+1}\in t_{n+1}H\, ', \quad \mathbf{st}\big(H_{t_{n+1}}'\big) = \mathbf{st}\big(H_{z_{n+1}}''\big) .$$ Choose $a_{n+1}=t_{n+1},$ $b_{n+1}=z_{n+1}.$ The sequences $a_1, a_2, \ldots \in H\,'$ and $b_1, b_2, \ldots \in H\,''$ have been constructed.

Repeatedly applying Lemma \ref{lemma7} (4), we get an invertible linear mapping $\varphi: H\,' \rightarrow H\,''$ such that for any $i$ $$\varphi\left(a_i H\,'\right)= b_i H\,'' $$ and the restriction of $\varphi$ to $a_i H\,'$ is an isomorphism $ H_{a_i}' \rightarrow H_{b_i}''.$ It implies that for an arbitrary $a\in a_i H\,'$ we have $$ \frac{\mu\,'(a)}{\mu\,'(a_i)} = \frac{\mu\,''\big(\varphi(a_i)\big)}{\mu\,''(b_i)}, \quad  \frac{\mu\,''\big(\varphi(a)\big)}{\mu\,'(a)} =\frac{\mu\,''(b_i)}{\mu\,'(a_i)}. $$ Hence for $i< j$ $$  \frac{\mu\,''(b_i)}{\mu\,'(a_i)}=\frac{\mu\,''(b_j)}{\mu\,'(a_j)}=\alpha > 0.$$ Now, for an arbitrary element $a\in H\,'$ $$\mu\,''\big(\varphi(a)\big) = \alpha \cdot \mu \,' (a), $$ that is, the measure algebras $H\,',$ $H\,''$ are scalar equivalent. This completes the proof of the proposition.
\end{proof}		
			
Summarizing the above, we can state the following theorem.

\begin{theorem} An arbitrary countable locally standard measure algebra is scalar equivalent to  one of the following measure algebras: $$ \mathbf{St}_n; \quad H(\infty)\otimes H(s), \ s\in \mathbb{SN} ; \quad H(r,s), \quad H^{+}(r,s) , \ r\in [0,\infty ), \quad s\in  \mathbb{SN} \setminus   \mathbb{N}.$$
\end{theorem}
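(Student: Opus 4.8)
The plan is to combine the two halves of the theory already in place: the spectrum is a complete scalar-equivalence invariant (Proposition \ref{proposition1}), and every admissible spectrum is realized by one of the four model families (Examples \ref{Ex8}, \ref{Ex9}, \ref{Ex10}). Thus the argument is a matching argument rather than a fresh construction.

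First I would take an arbitrary countable locally standard measure algebra $(H,\mu)$ and apply Lemma \ref{lemma5} to conclude that $\text{Spec}(H)$ is a saturated set of Steinitz numbers. I would then appeal to the classification of saturated spectra recorded in (\ref{equa2}): every such set is exactly one of $\{1,2,\ldots,n\}$, $S(\infty,s)$ with $s\in\mathbb{SN}$, or $S(r,s)$ and $S^{+}(r,s)$ with $s\in\mathbb{SN}\setminus\mathbb{N}$ and $r\in[1,\infty)$. This reduces the problem to a finite case split governed entirely by the shape of the spectrum.

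Next, for each of these four shapes I would name the model algebra whose spectrum coincides with it, using the explicit computations of the Examples: $\text{Spec}(\mathbf{St}_n)=\{1,\ldots,n\}$; $\text{Spec}(H(\infty)\otimes H(s))=S(\infty,s)$; $\text{Spec}(H(r,s))=S(r,s)$; and $\text{Spec}(H^{+}(r,s))=S^{+}(r,s)$. Each of these model algebras is itself countable and locally standard, hence an admissible input for Proposition \ref{proposition1}. Having produced a model $H_{0}$ from the list with $\text{Spec}(H_{0})=\text{Spec}(H)$, I would invoke Proposition \ref{proposition1} to conclude that $(H,\mu)$ is scalar equivalent to $H_{0}$, which is one of the algebras in the statement, completing the proof.

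The main obstacle is not analytic but one of bookkeeping: I must verify that the four cases of (\ref{equa2}) are genuinely exhaustive and that each is matched to exactly one of the examples, checking in particular that the parameter ranges line up (the $r\in[1,\infty)$ of the spectral list against the range stated in the theorem, and the finite-versus-infinite distinction for $s$ inside $S(\infty,s)$, where a finite $s$ collapses $S(\infty,s)$ to $\mathbb{N}$). Once the case split is confirmed to be a partition of the possible spectra, the theorem follows formally, since nothing beyond Lemma \ref{lemma5} and Proposition \ref{proposition1} requires re-proving here.
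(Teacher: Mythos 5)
Your proposal is correct and follows essentially the same route as the paper, which states this theorem as a direct summary of exactly the ingredients you assemble: Lemma \ref{lemma5} plus the classification (\ref{equa2}) of saturated spectra, the realizations in Examples \ref{Ex8}, \ref{Ex9} and \ref{Ex10}, and the completeness of the spectrum as an invariant from Proposition \ref{proposition1}. Your bookkeeping caution about parameter ranges is well placed, since the theorem's stated range $r\in[0,\infty)$ versus $r\in[1,\infty)$ in (\ref{equa2}) is a discrepancy in the paper itself, not in your argument.
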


\end{document}